\documentclass[11pt, reqno, english]{amsart}
\usepackage{amsthm,amsmath,amssymb}
\usepackage{graphicx} 
\usepackage{xcolor} 
\usepackage{charter}
\usepackage{euler}
\usepackage[T1]{fontenc}
\usepackage[left=1in,right=1in,top=1in,bottom=1in]{geometry}

\newcommand{\e}{\varepsilon}
\newcommand{\f}{\varphi}

\newcommand{\im}{\textrm{im}}

\newcommand{\RR}{\mathbb{R}}
\newcommand{\R}{\mathcal{R}}

\newcommand{\Cech}{\operatorname{\check{C}ech}}

\newcommand{\rch}{\operatorname{Rch}}

\theoremstyle{plain}
\newtheorem{theorem}{Theorem}
\newtheorem{lemma}[theorem]{Lemma}
\newtheorem{corollary}[theorem]{Corollary}
\newtheorem{proposition}[theorem]{Proposition}

\theoremstyle{definition}
\newtheorem{definition}[theorem]{Definition}

\newtheorem{remark}[theorem]{Remark}

\theoremstyle{remark}

\title{Detecting invariant manifolds of dynamical systems using persistent homology}
\author{Stavros Anastassiou}
\address{Department of Mathematics, University of Western Macedonia, Kastoria, Greece}
\email{sanastassiou@gmail.com}
\author{Žiga Virk$^{1}$}
\address{Faculty of Computer and Information Science, University of Ljubljana, and institute IMFM, Ljubljana, Slovenia}
\email{ziga.virk@fri.uni-lj.si}
\subjclass[2020]{37D10, 37C05,55N31}
\keywords{invariant manifolds, persistent homology}

\begin{document}
\maketitle
\footnote{Corresponding author.}
\begin{abstract}
    We use methods of Persistent Homology Theory to study invariant manifolds of dynamical systems. We first establish connections between the persistence diagrams of two sets which are close to each other, with respect to the Hausdorff distance. We then apply these results to study properties of limit sets of specific dynamical systems, by using the persistence diagram of a numerically obtained sample set. Under mild assumptions, we show how to use numerical data to state analytical results concerning the geometry of the limit sets.
\end{abstract}
\section{Introduction}
The identification and subsequent geometric characterization of invariant manifolds are of central importance in the study of dynamical systems, since these manifolds act as the phase space ``skeleton" and can be used to characterize the asymptotic behavior of the orbits.

This is why a number of methods, both analytical and numerical, have been developed to attack this problem; the interested reader can find various perspectives in \cite{Haro et al, Haro-Llave, Nipp-Stoffer, Krauskopf, Kuznetsov} and the references therein. 

Topological Data Analysis arose from the wider context of computational topology \cite{Mis}. It provides a rigorous mathematical framework for studying the ``shape" of data sets \cite{Edels, Carlsson}. In the context of dynamical systems, these data sets typically manifest as numerically generated periodic orbits, tori, or strange attractors within the phase space. By constructing a filtration of simplicial complexes from such data, Persistent Homology (PH) (see \cite{Virk}) effectively distinguishes short-lived topological artifacts from persistent, robust topological features that signify true underlying geometric structures.

This suggests that, by analyzing the persistent topological signatures of densely sampled trajectories of dynamical systems, we can locate and study geometric structures present in the phase space of a system and this is the approach we pursue in this article (regarding the problem of analyzing the homological properties of a general manifold based on a sample of it, see also \cite{Smale}). \textbf{As the main contribution of this paper we present a framework for applying PH to detect and analyze invariant manifolds}. 

The utility of PH in analyzing dynamical systems has been demonstrated in recent studies. In \cite{Kramar}, they use PH to characterize flow patterns in data series extracted from large complex systems, in \cite{Bohlsen} they distinguish topologically different orbits of a magnetic field with the use of PH, while in \cite{Perea} they use embedding techniques to describe the PH of quasiperiodic data. This work aims to extend the applicability of PH to dynamical systems by establishing a theoretical framework that enables rigorous analytical assertions regarding the existence and geometry of limit sets.

The remainder of this paper is structured as follows. Section 2 contains the definitions that will be used this work. In section 3 we develop the main theoretical framework of this work. Based on the Stability Theorem of PH, we establish correspondences between the persistence diagrams of two sets that are close with respect to the Hausdorff distance and present results that allow for the rigorous extraction of the homological properties of a set directly from its persistence diagram. Finally, in section 4, we apply these results to specific dynamical systems. We manage to establish the existence of periodic orbits in two planar vector fields and of an invariant torus in a 3d system. We also investigate the geometry of the limit set present in the phase space of a system in dimension 4. Although we consider only vector fields, our approach can be utilized in the case of diffeomorphisms as well. 

\section{Preliminaries}
A \textbf{dynamical system} on a metric space $(X, d)$ is described by a continuous map $f: X \to X$ or a flow $\phi: \mathbb{R} \times X \to X$.

Let $x\in X$. In the case of maps, we define:
\begin{itemize}
\item {the positive orbit as $O^+(x) := \{f^n(x) : n \in \mathbb{N}\}$}
\item {an orbit segment of length $N\in \mathbb{N}$ as $O_N(x) := \{f^n(x) : 0 \le n \le N\}$ or $O_{[n,m]}:= \{f^k(x) : n \le k \le m\}$, for the orbit segment in the ``time interval" $[n,m]$}
\item {the tail of the orbit as $O_{\ge N}(x) := \{f^n(x) : n \ge N\}$.}
\end{itemize}
while the analogous definitions for flows are:
\begin{itemize}
\item {$O^+(x) := \{\phi(t,x) : t\geq 0\}$}
\item {$O_T(x) := \{\phi(t, x) : 0 \le t \le T\}$, $O_{[t_1,t_2]}(x) := \{\phi(t, x) : t_1 \le t \le t_2\}$}
\item {$O_{\ge T}(x) := \{\phi(t, x) : t \ge T\}$.}
\end{itemize}

The $\omega$-\textbf{limit} set $\omega(x)$ of a point $x$ is the set of all limit points of its positive orbit $O^+(x)$. It represents the long-term, asymptotic behaviour of the system starting at $x$.
Formally:
$$\omega(x) = \bigcap_{N \ge 0} \overline{O_{\ge N}(x)}.$$

In this article, we want to study $\omega$-limit sets using finite segments of orbits. To measure the ``proximity" of these segments to the limit sets of interest, we use the \textbf{Hausdorff distance}, which, for two non-empty closed and bounded subsets $A$ and $B$ of $X$, is defined as:
$$d_H(A, B) = \max \left\{ \sup_{a \in A} d(a, B), \sup_{b \in B} d(b, A) \right\},$$
where $d(a, B) = \inf_{b \in B} d(a, b)$ is the distance from point $a$ to the set $B$.

If the positive orbit $O^+(x)$ of a point $x$ is relatively compact, then $\lim_{N \to \infty} d_H(\overline{O_{\ge N}(x)}, \omega(x)) = 0$ (\cite{Jurga - Todd}). We would like to have an analogous property for a finite segment of the orbit. For this, we need the following definition:
\begin{definition}
The $\varepsilon$-covering time for the set $A\subseteq X$ is defined as follows:
\[
T(\varepsilon, A) = \inf \left\{ n\in \mathbb{N} \ \middle| \ \forall z \in A, \exists \ m \in \{0,..,n\}:\ d(f^m(x), z) < \varepsilon \right\},
\]
in the case of maps and as:
\[
T(\varepsilon, A) = \inf \left\{ T > 0 \ \middle| \ \forall z \in A, \exists t \in [0, T]\ :\  d(\phi(t, x), z) < \varepsilon \right\},
\]
in the case of flows.
\end{definition}

\begin{remark}
This definition is, of course,  equivalent to:
$$T(\varepsilon, A) = \inf \left\{ T > 0 \ \middle| \ \sup _{a\in A}d(a, \gamma_{[0, T]}(x)) < \varepsilon \right\}.$$
It represents the minimal time needed for a set A to be $\epsilon$-close to an initial segment of the orbit. 
\end{remark}

According to \cite{Jurga - Todd}, if $\omega (x)$ is non-empty and compact, for some point $x$, $T(\epsilon,\omega(x))$ exists and it is a positive real number, while if the orbit $O^+(x)$ is relatively compact then, $\forall \epsilon >0,\ \exists n,m \in \mathbb{N}:\ d_H(O_{[n,m]},\omega(x))<\epsilon$. Thus, one can find a finite orbit segment which is arbitrarily close, in the sense of Hausdorff distance, to the limit set of this orbit. In what follows, we shall assume that the orbit segment we are studying is arbitrarily close to the sought-after limit set.

Another assumption we shall make concerns the reach of a closed subset.
\begin{definition}Let $S$ be a closed subset of $X$.
    \begin{itemize}\item[]
        \item The \textbf{medial axis} $ax(S)$ of $S$ is the set of points in $X$ that do not have a unique closest point on $S$.
    \item The \textbf{reach} of $S$ is $\rch(S)=\inf_{y\in ax(S)}\min_{x\in S} d(x,y)$.
    \end{itemize}
\end{definition}
In what follows, $S$ will usually be a set, consisting of points sampling the orbit segment we are interested in. The open $r$ ball around $x$ will be denoted as $B(x,r)=\{y\in X \mid d(x,y)<r\}$. From this set, we shall construct the (open) \textbf{Cech complex} $\Cech(S, r)$ as follows:
\begin{itemize}
    \item The vertices set is $S$ itself.
    \item Cech complex $\Cech(S, r)$ consists of simplices $\sigma \subseteq S$, for which $\cap_{x \in \sigma} B(x, r) \neq \emptyset$.
\end{itemize}
In this way, a filtration is defined, consisting of all complexes at all positive scales, that is, the Cech complexes, along with the natural inclusions $\iota_{r_1, r_2}\colon \Cech(S, r_1) \hookrightarrow \Cech(S, r_2)$ for $r_1 \leq r_2$ form the Cech filtration $\{\Cech(S, r)\}_{r >0}$. Observe that when $X$ is an Euclidean space, the nerve theorem implies that $\Cech(S, r)$ is homotopy equivalent to the open $r$-neighborhood of $S$. 

\textbf{Persistent homology} is obtained by applying the homology functor with coefficients in a field (in any dimension $q$) to this filtration, thus obtaining a collection of vector spaces (say, $\{H_q(\Cech(S, r))\}_{r >0}$) with the linear bonding maps. It turns out that in the relevant cases, the obtained structure decomposes as a direct sum of interval modules and can be represented as a \textbf{persistence diagram}: a point with coordinates $(r_1,r_2)$ is contained in this diagram if an interval module with the initial  and final ``times" $r_1,\ r_2$ exists in this decomposition (see \cite{Virk} for details). Thus, the persistence diagram of $S$ (denoted by $PD(S)$ in what follows) contains information regarding the homology of $S$.

If two sets are close to each other, it is only natural to expect that their corresponding persistence diagrams are also close to each other. We shall see in the next section that this is true, as long as we measure the proximity of two persistence diagrams with the bottleneck distance.

\begin{definition}
    Let $D$ and $D'$ be two persistence diagrams, viewed as multisets of points in the extended upper half-plane 
$\Omega = \{ (b, d) \in \mathbb{R} \times (\mathbb{R} \cup \{\infty\}) \mid b < d \}$, 
along with the diagonal $\Delta = \{(t, t) \mid t \in \mathbb{R}\}$ representing the trivial interval. Here we consider $\Delta$ as single point. For formal reasons we assume each persistence diagram contains infinitely many copies of the trivial interval $\Delta$.

The \textbf{bottleneck distance} $d_B(D, D')$ is defined as:
\begin{equation}
    d_B(D, D') = \inf_{\gamma \colon D \to D'} \sup_{x \in D} \|x - \gamma(x)\|_\infty
\end{equation}
where $\gamma: D \to D'$ ranges over all bijections between $D$ and $D'$. Observe that when persistence diagrams are finite in the sense that they contain only finitely many non-diagonal points, the bijections $\gamma$ will map copies of $\Delta$ mostly to itself. Furthermore, since $\Delta$ can be represented by any point of the form $(t,t)$, we declare $\| (a,b)-\Delta\|_\infty$ to equal $\min_{t\in \RR} \| (a,b)-(t,t)\|_\infty= \| (a,b)-((b+a)/2,(b+a)/2)\|_\infty=(b-a)/2$.

\end{definition}

We wish to use the information contained in the persistence diagram of our sample set $S$ to prove results concerning the geometry of the $\omega$-limit set we are interested in, provided that they are close to each other. To achieve this, in the next section, we use the Stability Theorem to state general results towards this direction. 
\section{Persistence diagrams: stability and reconstruction results}

In this section we present several results explaining how PH of a sample set $S$ can reveal the information on homology and persistent homology of a set $X$.
They include a Stability result (the fact that PH is continuous), a Reconstruction result (deducing the homotopy type of a space from a sample), and an inference result (deducing the homology of a space from a sample). There are many versions of such results in the literature pertaining PH. In this section we provide the selection of such results that are used later in the paper.

Of central importance is the following:

\begin{theorem}
\label{ThmStab}
[Stability Theorem](\cite{Virk}) 
Let $S, S' \subseteq X$ be compact subspaces of a metric space $X$. If $D, D'$ are the corresponding persistence diagrams in any dimension obtained via the open Cech filtration with the ambient space $X$, then $d_B(D, D') \leq d_H(S, S').$
\end{theorem}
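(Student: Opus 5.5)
The plan is to prove the Stability Theorem by building an interleaving between the two Čech filtrations and then invoking the algebraic stability (isometry) theorem for persistence modules. Set $\delta > d_H(S,S')$; at the end we let $\delta \downarrow d_H(S,S')$. Since bottleneck distance is an infimum over bijections, a bound $d_B(D,D')\le\delta$ for every such $\delta$ gives the claim.

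The first step is an interleaving at the level of complexes. We would like to say $\Cech(S,r)\subseteq \Cech(S',r+\delta)$, but the vertex sets differ, so we use simplicial maps instead of inclusions.
\begin{itemize}
\item Since $d_H(S,S')<\delta$, we choose a (set-theoretic) map $g\colon S\to S'$ with $d(x,g(x))<\delta$ for all $x$. Similarly we choose $g'\colon S'\to S$.
\item If $\sigma\in \Cech(S,r)$, pick a point $z\in\bigcap_{x\in\sigma}B(x,r)$. Then $d(z,g(x))<r+\delta$ by the triangle inequality. Hence $g(\sigma)\in\Cech(S',r+\delta)$, so $g$ induces a simplicial map $g_r\colon \Cech(S,r)\to\Cech(S',r+\delta)$, and likewise $g'_r$.
\end{itemize}
Note that the witness point $z$ lives in the ambient space $X$. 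This is exactly why the theorem is stated for Čech filtrations with the common ambient space $X$.

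Next we check the interleaving relations up to contiguity. Compare the composite $g'_{r+\delta}\circ g_r\colon\Cech(S,r)\to\Cech(S,r+2\delta)$ with the inclusion $\iota_{r,r+2\delta}$. For $\sigma\in\Cech(S,r)$ with witness $z$, each vertex $x\in\sigma$ and its image $g'g(x)$ satisfy $d(z,x)<r$ and $d(z,g'g(x))<r+2\delta$. So $z$ witnesses that $\sigma\cup g'g(\sigma)$ is a simplex of $\Cech(S,r+2\delta)$. The two simplicial maps are therefore contiguous, hence homotopic, and they induce equal maps on homology.

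The same argument gives the following facts.
\begin{itemize}
\item The composite $g_{r+\delta}\circ g'_r$ agrees on homology with $\iota'_{r,r+2\delta}$.
\item The squares $g_s\circ\iota_{r,s}$ and $\iota'_{r+\delta,s+\delta}\circ g_r$ commute up to contiguity, by the same witness argument, so they commute on homology.
\end{itemize}
Thus the persistence modules $\{H_q(\Cech(S,r))\}$ and $\{H_q(\Cech(S',r))\}$ are $\delta$-interleaved.

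Finally we apply algebraic stability: $\delta$-interleaved modules with well-defined persistence diagrams have bottleneck distance at most $\delta$. The main obstacle is this last step, namely ensuring the modules decompose and have diagrams in the first place. In general $S$ is an infinite compact set, so the modules need not be pointwise finite dimensional. We would handle this via compactness: the modules are q-tame, since for $r<s$ the map $\iota_{r,s}$ factors through the Čech complex of a finite $(s-r)/2$-net. We would then use the q-tame version of the isometry theorem (Chazal–de Silva–Glisse–Oudot), as in \cite{Virk}, which applies to q-tame modules. The open-ball convention only affects whether endpoints are open or closed, and this does not change bottleneck distance.
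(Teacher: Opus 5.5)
Your proof is correct. The paper gives no proof of its own and only cites \cite{Virk}, and your argument is the standard interleaving proof behind that reference: nearest-point vertex maps give a $\delta$-interleaving of the two \v{C}ech filtrations up to contiguity, and algebraic stability then finishes the proof.

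Your q-tameness step, which factors $H_q(\iota_{r,s})$ through the \v{C}ech complex of a finite $(s-r)/2$-net, is a useful addition. It is what guarantees that persistence diagrams exist for infinite compact $S$, which the paper only asserts ``in the relevant cases.''

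One small correction: the naturality squares commute exactly, not just up to contiguity, because $g$ is a single fixed vertex map for every $r$.
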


Thus, indeed, two sets which are close with respect to the Hausdorff distance have persistence diagrams which are close, with respect to the bottleneck distance. This is essentially a continuity statement, stating that the map $X \mapsto PD(X)$ is $1$-Lipschitz in the relevant metrics. We can therefore draw conclusions regarding the geometry of a set by studying the PD of a nearby set. 

The next proposition illustrates this.

\begin{proposition}
\label{CorStab}
Assume $S$ and $S'$ are compact subsets of a metric space $X$ with $d_H(S,S')\leq \e$. Then the following hold for the persistence diagrams obtained via the open Cech filtrations in $X$.
\begin{enumerate}
    \item The bottleneck distance between the geometric descriptors $PD(S)$ and $PD(S')$ is at most $\e$.
    
    \item Let $t<\infty$ be the largest finite death time of a 0-dim bar of $PD(S)$. Then $S'$ cannot be represented as the disjoint union of two subspaces that are more than $2t + 2\e$ apart. 
    
    \item Let $t<\infty$ be the largest finite death time of a 0-dim bar of $PD(S)$. If $S'$ is path-connected, then $d_H(S,S')\geq t$.

    \item Assume $S$ is discrete and let $t$ be the largest distance between the points of $S$. If $\e< t/2$, then $S'$ can't be a point.

    \item Assume $X=\RR^n$, equipped with the Euclidean metric, and let $t$ be the longest lifespan of a finite bar in $PD(S)$. If $\e < t/2$, then $S'$ can't be star-shaped (and in particular, it can't be convex).
\end{enumerate}
\end{proposition}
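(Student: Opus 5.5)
We prove the five items separately. Item (1) is Theorem~\ref{ThmStab} applied with $d_H(S,S')\le \e$. For items (2)--(4) we argue directly from the definitions of the open Cech complex and the Hausdorff distance, rather than through bottleneck matchings. The reason is that matching a finite $0$-dimensional bar to the diagonal only yields bounds of the form $t/2$, which are too weak for (2) and (3). Item (5) is the one place where the stability theorem does the work.

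For (2) we argue by contradiction. Suppose $S'=A\sqcup B$ with $d(A,B)>2t+2\e$. Choose $r>t$ with $2r+2\e<d(A,B)$. Every $s\in S$ lies within $\e$ of $S'$, so we can set $S_A=\{s\in S: d(s,A)\le \e\}$ and $S_B=\{s\in S: d(s,B)\le \e\}$; then $S=S_A\cup S_B$. Both sets are nonempty, because every point of $A$ (resp.\ $B$) is within $\e$ of $S$. They are disjoint, because a point in both would give $d(A,B)\le 2\e$. If $s_1\in S_A$ and $s_2\in S_B$ had $B(s_1,r)\cap B(s_2,r)\ni z$, the triangle inequality would give $d(A,B)\le 2\e+2r$, which is a contradiction. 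Hence no simplex of $\Cech(S,r)$ meets both $S_A$ and $S_B$, and $\Cech(S,r)$ is disconnected. Since $S$ is bounded, $\Cech(S,r)$ is connected for large $r$. So some $0$-dimensional class has a finite death time at least $r>t$, which contradicts the choice of $t$. One technical point needs care here: converting ``$\Cech(S,r)$ disconnected'' into ``some finite bar dies after $r$''. We use that $H_0$ of the filtration at scale $r$ has rank at least $2$, and that exactly one bar is infinite.

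For (3), set $\e=d_H(S,S')$ and suppose $\e<t$. Pick $r$ with $\e<r<t$. The complex $\Cech(S,r)$ is then disconnected, because a bar dies at $t>r$. Split its vertices into $S_1\sqcup S_2$ with no simplex meeting both parts. The functions $z\mapsto d(z,S_1)$ and $z\mapsto d(z,S_2)$ are continuous on $S'$. At every point of $S'$, at least one of them is $\le\e$. Each of the closed sets $\{z\in S': d(z,S_i)\le\e\}$ is nonempty, since points of $S_i$ are $\e$-close to $S'$. Since $S'$ is path-connected, hence connected, these two closed sets must intersect. This yields $z$ with $d(z,s_1),d(z,s_2)<r$ for some $s_i\in S_i$, so the edge $\{s_1,s_2\}$ lies in $\Cech(S,r)$, a contradiction. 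This connectedness step is the main obstacle, and it is exactly where path-connectedness of $S'$ enters.

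Item (4) is elementary. If $S'=\{p\}$, every point of $S$ is within $\e$ of $p$, so $\operatorname{diam} S\le 2\e<t$, which contradicts the definition of $t$.

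For (5), suppose $S'$ is star-shaped with respect to $c$. Then each open $r$-neighbourhood of $S'$ is also star-shaped with respect to $c$: if $d(y,s)<r$, the segment from $c$ to $y$ stays within $r$ of the segment from $c$ to $s$, which lies in $S'$. By the nerve theorem, every $\Cech(S',r)$ is therefore contractible. So $PD(S')$ consists in every dimension only of a single infinite $0$-dimensional bar. A finite bar of $PD(S)$ with lifespan $t$ cannot be matched to an infinite bar, so in any bijection it must go to $\Delta$, at cost $t/2>\e$. This contradicts item (1).
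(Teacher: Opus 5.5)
Your proof is correct. Items (1) and (4) coincide with the paper's, and so does the final bottleneck step of (5). Item (2) is the same idea executed more carefully, while (3) and the contractibility step in (5) follow different routes.

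In (2) the paper uses the same sets $S_A,S_B$, but it argues at scale $t$ itself through $\Cech(S,t)\simeq S^t$. That step invokes the nerve theorem, so it implicitly assumes $X=\RR^n$, although the proposition is stated for an arbitrary metric space. It is also loose about the endpoint convention of the open filtration. Your edge-witness argument at a scale $r>t$ avoids both issues and gives a death time strictly larger than $t$.

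In (3) the paper proceeds constructively. It takes a path $\gamma$ in $S'$, covers it by $t'$-balls centred at a chain $s_0,\dots,s_k$ of sample points, and uses the points $\gamma(v_{i+1})$ as witnesses for the edges. This shows directly that $\Cech(S,t')$ is connected for every $t'\in(d_H(S,S'),t)$. Your contrapositive argument instead splits the vertex set of the disconnected complex $\Cech(S,r)$ and covers $S'$ by the two nonempty closed sets $\{z\in S': d(z,S_i)\le\e\}$. It implicitly uses that all $0$-dimensional bars are born at $0$, so that a bar dying at $t>r$ is alive at $r$; you should state this. Your argument is shorter and uses only connectedness of $S'$, so it proves the statement for connected rather than path-connected $S'$. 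The paper's version has the merit of exhibiting explicit edge paths in the complex.

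In (5) the paper proves contractibility of $\Cech(S',r)$ for star-shaped $S'$ (Lemma~\ref{LemmaStar}) inside the simplicial complex. It pushes simplicial spheres towards the star centre through contiguous maps and applies Whitehead's theorem. You instead note that open $r$-thickenings of a star-shaped set are star-shaped, hence contractible, and pass to $\Cech(S',r)$ via the nerve theorem. Your route is more economical, but it rests on the nerve theorem for infinite open covers by Euclidean balls. The paper itself asserts that fact in its preliminaries, whereas its lemma avoids passing to the thickening.
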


\begin{proof}
    (1) This is a direct consequence of Theorem \ref{ThmStab}.

    (2) Assume $S'$ is the disjoint union of its subsets $A$ and $B$, with $\inf_{a\in A, b \in B}d(A,B)> 2t + 2\e$. Define $S_A=\{s\in S; \exists a \in A: d(s,a)\leq \e \}$ and $S_B=\{s\in S; \exists b \in B: d(s,b)\leq \e \}$. Clearly $S$ is the disjoint union of $S_A$ and $S_B$ and in fact, for any $s_A\in S_A$ and $s_B \in S_B$ we have $d(s_A,s_B)>2t$ by the assumption. This implies that $S_A^t$ (i.e., the closed $t$-neighborhood of $S_A$) and $S_B^t$ are disjoint components of $S^t$ and hence $\Cech(S,t)\simeq S^t$ is not connected. Thus the largest death time $T$ of $H_0$ is larger than $t$, as $\Cech(S,r)$ is connected for $r \geq T$.

    (3) Assume $S'$ is path connected. We will prove that if $d_H(S,S')<t$, then the largest finite death time of a 0-dim bar of $PD(S)$ is smaller than $t$. 
    
    We claim the following: Assuming $d_H(S,S')<t'<t$, for any pair of points $s,s'\in S$ there exists a sequence of points $s=s_0, s_1, \ldots, s_k=s'$ in $S$, such that $d(s_i, s_{i+1})<2t'$ (this implies that for each $i$, $(s_i,s_{i+1})$ is an edge in $\Cech(S,t')$). The claim implies that each pair of points of $S$ is connected by path in $S^{t'} \simeq \Cech(S,t')$, and thus the largest finite death time of a 0-dim bar of $PD(S)$ is at most $t'$ (hence is smaller than $t$). 

    The proof of the claim: Choose $x,x'\in S'$ such that $d(s,x)<t'$ and $d(s', x')<t'$. Choose a path $\gamma\colon [0,1]\to S'$ between $x=\gamma(0)$ and $x'=\gamma(1)$. As the open $t$-balls around the points of $S$ cover $S'$, we can find a sequence of points $s=s_0, s_1, \ldots, s_k=s'$ in $S$, and a sequence of parameters $0=v_0 < v_1< \ldots <v_k< v_{k+1=1}$ in $[0,1]$, such that for each $i$ the open $t'$-ball around $s_i$ contains $\gamma([v_i, v_{i+1}])$. The claim is proved by observing that $d(s_i, s_{i+1}) \leq d(s_i, \gamma(v_{i+1})) + d(\gamma(v_{i+1}), s_{i+1})< t' + t' = 2t'$.

    (5) Assume $S'=\{x\}$. As $\e< t/2$, $S$ lies within the open $t/2$-ball around $x$, hence the pairwise distances within $S$ are less than $t$, a contradiction to our observation. 

    (6) According to Lemma \ref{LemmaStar}, a star-shaped set has trivial persistence diagram, i.e., the only non-trivial bar is $[0,\infty)$ in dimension $0$, hence the Hausdorff distance from $S$ to such a set is at least $t/2$ by the Stability Theorem.
\end{proof}

A subset $A \subset \RR^n$ is \textbf{star-shaped} if there exists $a\in A$, such that $\forall x\in A$, the line segment from $x$ to $a$ lies in $A$. Each convex set is star-shaped. 


\begin{lemma}
\label{LemmaStar}
    Given a star-shaped metric space $A\subset \RR^n$, $\Cech(A,r)$ is contractible for each $r>0$, and thus $PD(A)$ contains exactly one bar: $[0,\infty)$ in dimension zero. 
\end{lemma}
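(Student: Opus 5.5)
Since $A$ sits in $\RR^n$ with the Euclidean metric, the ambient space for the Čech filtration is $\RR^n$, and the open balls in question are convex. The plan is to apply the nerve theorem, as recalled in the Preliminaries, to identify $\Cech(A,r)$ up to homotopy with the open $r$-neighborhood $A^r=\bigcup_{x\in A}B(x,r)$. Then I will show directly that $A^r$ is star-shaped with respect to the same center $a$ that witnesses star-shapedness of $A$. A star-shaped subset of $\RR^n$ is contractible, via the straight-line homotopy $H(y,s)=(1-s)y+sa$. Hence $\Cech(A,r)$ is contractible for every $r>0$.

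\textbf{Key step: star-shapedness of $A^r$.} Take $y\in A^r$, so there is $x\in A$ with $\|y-x\|<r$. For $s\in[0,1]$ put $y_s=(1-s)y+sa$ and $x_s=(1-s)x+sa$. Since $A$ is star-shaped about $a$, we have $x_s\in A$. Moreover
\[
\|y_s-x_s\|=(1-s)\|y-x\|<r
\]
for $s<1$, while at $s=1$ we have $y_1=a\in A\subseteq A^r$. So $y_s\in A^r$ for all $s$, and the segment from $y$ to $a$ lies in $A^r$. Consequently every map $H_q(\Cech(A,r))\to H_q(\Cech(A,r'))$ is a map between homologies of contractible spaces. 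All of these groups vanish for $q>0$. For $q=0$ each group is the field, and the bonding maps are isomorphisms, since the inclusions are maps between path-connected spaces and therefore send the unique component to the unique component. The persistence module therefore decomposes as a single interval module in dimension $0$. Its birth is at $0$, since all scales $r>0$ are nontrivial, and it never dies. This gives exactly one bar, $[0,\infty)$.

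\textbf{Main obstacle.} The delicate point is the use of the nerve theorem when $A$ is infinite; for instance, $A$ may be a convex body rather than a finite sample. For a finite or compact $A$, the open cover of $A^r$ by the convex balls $B(x,r)$ has all nonempty finite intersections convex, hence contractible. The nerve theorem for open covers of paracompact spaces then gives $\Cech(A,r)\simeq A^r$. The Čech complex here is the full nerve on vertex set $A$, possibly infinite, and the version of the nerve theorem for arbitrary open covers of a paracompact space with contractible finite intersections applies, because $A^r$ is a metric space and hence paracompact.

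Should one prefer to avoid that, there is an alternative argument that works directly on the simplicial side. Since homology is compactly supported, it suffices to show that every finite subcomplex $K\subseteq\Cech(A,r)$ is contained in a contractible subcomplex. For this, add the vertex $a$ together with the points $x_s$ on the segments from each vertex of $K$ to $a$. Then apply the nerve theorem to this finite set, whose $r$-neighborhood is again star-shaped by the computation above. I would present the first argument and mention the second as a remark.
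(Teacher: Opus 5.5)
Your argument is correct, but it takes a different route from the paper. The paper never passes to the union of balls. It works on the simplicial side via Whitehead's theorem:
\begin{itemize}
\item It takes a simplicial map $f\colon S^k\to\Cech(A,r)$ based at the center $a$.
\item It notes that $f$ already lands in $\Cech(A,r-\e)$ for some $\e>0$, since only finitely many simplices are involved.
\item It scales all vertices toward $a$ by a factor $\mu<1$ close enough to $1$ that $f$ and $\mu\cdot f$ are contiguous.
\item Iterating, $\mu^j\cdot f$ eventually has all its vertices in $B(a,r)$, which spans a full simplex, so $f$ is null-homotopic.
\end{itemize}

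You instead use the nerve theorem for arbitrary open covers of paracompact spaces (e.g.\ Hatcher, Cor.~4G.3) to get $\Cech(A,r)\simeq A^r$. You then observe that $A^r$ is again star-shaped about $a$. Your computation $\|y_s-x_s\|=(1-s)\|y-x\|<r$ is right, and the hypotheses of that nerve theorem (open convex balls, $A^r$ metrizable) do hold for infinite $A$, as you say.

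Your route is shorter and yields the nicer geometric fact that every thickening of a star-shaped set is star-shaped. The paper's route is self-contained on the combinatorial side and never needs the infinite-cover nerve theorem. Its contiguity trick would also carry over verbatim to complexes with no nerve theorem, such as Vietoris--Rips.

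The fallback remark at the end does not work as written. Your star-shapedness computation needed every point $x_s$, $s\in[0,1]$, to lie in the vertex set. Adding only finitely many points on the segments therefore does not make the $r$-neighborhood star-shaped; with a coarse grid it genuinely fails. Adding the whole segments instead produces an infinite set, which brings back the infinite nerve theorem you wanted to avoid. If you want a simplicial-side argument, the paper's $\mu$-scaling contiguity argument is the right tool; otherwise simply drop the remark.
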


\begin{proof}
    The proof is inspired by that of Hausmann and \cite{ZVFoot}. We will use the Whitehead theorem and prove that all the homotopy groups of $\Cech(A,r)$ are trivial. Without loss of generality we may assume $a=(0, \ldots, 0)=0^n\in \RR^n$, and we use $a$ as our basepoint.

    First, it is easy to see that $\Cech(A,r)$ is connected as $A$ is path connected. 

    In order to prove that $\pi_k(\Cech(A,r))$ is trivial, take a simplicial sphere $f\colon S^k \to \Cech(A,r)$ based at $0^n \in \RR^n$. Choose $\e>0$ so that for each simplex $\sigma$ of the chosen triangulation of $S^n$, the intersection $\cap_{v\in \sigma}B_{\RR^n}(v,r-\e)\neq \emptyset$, i.e., so that the image of $f$ lies in $Cech(A, r-\e)$. Choose $\mu<1$ so that for each vertex $v$ of $S^n$, $d(f(v), \mu \cdot f(v))<\e$ (here and in similr expressions involving the multiplication with $\mu$, we treat $f(v)$ as a point in $\RR^n$.) [we could express $\mu$ explicitly using the diameter of the image of $f$, but we don't need an explicit expression]. The star-shaped assumption implies that for each vertex $v$ of $S^n$, $\mu \cdot f(v)\in A$.  We next prove that maps $f$ and $\mu \cdot f$ are homotopic in $\pi_k(\Cech(A,r))$.
    
    For a  simplex $\sigma \in S^n$, observe that $\cap_{v\in \sigma} B(f(v), r)$ contains a point $c_\sigma$. Then $\mu \cdot f(\sigma)$ is a simplex in $\Cech(A,r)$ as $\cap_{v\in \sigma} B(\mu \cdot f(v), r)$ contains a point $\mu \cdot c_\sigma$, thus $\mu \cdot f$ is a well defined simplicial map into $\Cech(A,r)$. Furthermore, $d(c_\sigma,\mu \cdot c_\sigma )<\e$. Hence 
    \[
    \cap_{v\in \sigma} B(f(v), r) \bigcap \cap_{v\in \sigma} B(\mu \cdot f(v), r)
    \]
    contains $c_\sigma$, meaning that $f(\sigma) \cup \mu\cdot f(\sigma)$ forms a simplex in $\Cech(A,r)$. This means that $f$ and $\mu \cdot f$ are contiguous and have the same basepoint $0^n = \mu \cdot 0^n$, thus they are based homotopic in $\Cech(A,r)$. 

    We proceed inductively with the same constant $\mu$ to deduce that all $\mu^j \cdot f$ are homotopic. For large $j$, the vertices of the image of $\mu^j \cdot f$ are contained $B(0^n, r)$, which spans a full simplex in $\Cech(A,r)$, hence $\mu^j \cdot f$, and by extension $f$, are null-homotopic.
\end{proof}

Given $t>0$ and $A\subset \RR^n$, define $A^t=\cup_{a \in A} \overline{B(a,x)}$ to be the $t$-thickening of $A$, with $\overline{B(a,x)}$ being the closed $r$ ball around $x$. The following statement is a \textit{reconstruction result}, stating that if $X$ is tame enough and $S$ is close enough to $X$, then the Cech complexes of $S$ at certain scales reveal the homotopy type of $X$.

\begin{theorem} \label{Recon1}
    [Proposition 5 of \cite{Wint}, Reconstruction for sets of positive reach]
    Let $X \subset \RR^n$ be a subset of positive reach and choose $\R \leq  \rch(X)$. Assume that $S\subset \RR^n$ is a sample approximating $X$, so that for parameters $\e, \delta < \R$:
    \begin{enumerate}
        \item $X \subseteq S^\e$, i.e.,  for each point of $X$ there is at least one point of the sample at distance at most $\e$ from it. 
        \item $S \subseteq X^\delta$, i.e., each point of the sample is at most $\delta$ far from $X$. In particular, $d_H(X,S) \leq \max\{\e, \delta\}$.
    \end{enumerate}
    If $\e$ and $\delta$ satisfy
    \[
    \e + \delta \sqrt{2} \leq (\sqrt{2}-1)\R,
    \]
    then $\Cech(S,r)\simeq X$ for each 
    \[
    r \in \left[ 
    \frac{1}{2}\left(\R + \e - \sqrt{\Delta}\right), 
    \frac{1}{2}\left(\R + \e + \sqrt{\Delta}\right)
    \right],
    \]
    where $\Delta = 2(\R -\delta)^2 - (\R + \e)^2$.
\end{theorem}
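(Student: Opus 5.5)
Via the nerve theorem, $\Cech(S,r)$ is homotopy equivalent to the union of open $r$-balls around $S$, and up to a negligible adjustment of $r$ (or using closed balls) this is $S^r$. So the statement reduces to $S^r \simeq X$ for $r$ in the stated interval. For this I would follow the Niyogi--Smale--Weinberger approach, as adapted to sets of positive reach by Attali--Lieutier et al.: show that $S^r$ deformation retracts onto $X$ along the nearest-point projection $\pi_X\colon X^{\R}\setminus X \cup X \to X$. This projection is well defined and continuous on the open $\R$-neighborhood of $X$ because $\R\le\rch(X)$. First check that $S^r\subseteq X^{r+\delta}$ with $r+\delta<\R$ (this needs the upper endpoint plus $\delta$ to be below $\R$, which follows from the hypothesis), so that $\pi_X$ is defined on all of $S^r$. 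Then check that $X\subseteq S^\e\subseteq S^r$, since $r\ge \e$ on the interval.

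The candidate homotopy is the straight-line one, $H(y,\lambda)=(1-\lambda)y+\lambda\,\pi_X(y)$. It is continuous, it fixes $X$, and it ends in $X$. The only thing to prove is that it never leaves $S^r$. Fix $y\in S^r$ and put $x=\pi_X(y)$. If the segment $[y,x]$ stays in $S^r$ we are done.

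The key geometric lemma I would prove is this. Let $p\in S$ with $|y-p|\le r$, and let $x'\in X$ with $|p-x'|\le\delta$. Every point $z$ of the segment $[y,x]$ lies within distance $r$ of some sample point, namely either $p$ itself or a sample point within $\e$ of $x$. Two facts drive this. First, by positive reach, $X$ misses the open ball of radius $\R$ tangent at $x$ in the direction $y-x$, that is, the ball centred at $x+\R\,(y-x)/|y-x|$. Hence $x'$ lies outside it, so $\langle x'-x, u\rangle \le |x'-x|^2/(2\R)$, where $u$ is the unit vector toward $y$. Second, there is a sample point $q$ with $|q-x|\le\e$.

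I would then reduce to a planar extremal configuration. Parametrize $z=x+s u$ with $0\le s\le |y-x|$. Points with $s\le r-\e$ are covered by the ball around $q$. For larger $s$, use the constraint on $p$: it lies within $\delta$ of a point outside the tangent ball, and within $r$ of $y$. Bounding $|z-p|$ then yields a quadratic inequality in $r$. It should be $r^2-(\R+\e)r+\tfrac12\big((\R+\e)^2-\ldots\big)\le0$, whose roots are exactly $\tfrac12(\R+\e\pm\sqrt{\Delta})$ with $\Delta=2(\R-\delta)^2-(\R+\e)^2$. The hypothesis $\e+\sqrt2\,\delta\le(\sqrt2-1)\R$ is precisely what makes $\Delta\ge0$ (indeed $\sqrt2(\R-\delta)\ge \R+\e$), so the interval is nonempty.

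The main obstacle is this worst-case computation: identifying the extremal position of $p$, which is near the boundary of the tangent ball at distance $\delta$ from $X$, and verifying that it yields exactly the stated quadratic. I would first handle the case where $y$ lies far out, at distance near $r+\delta$ from $X$, and then argue monotonicity in $|y-x|$. Since the statement is cited as Proposition 5 of \cite{Wint}, it is also acceptable to invoke that reference for the computation after setting up the retraction.
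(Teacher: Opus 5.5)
The paper does not prove this statement; it imports it as Proposition 5 of \cite{Wint}. Your framework is the right one and is the standard Niyogi--Smale--Weinberger-type argument. You retract the union of balls onto $X$ along straight segments to $\pi_X$, and cover each segment $[y,x]$, $x=\pi_X(y)$, by the closed balls $B(q,r)\cup B(p,r)$. You then use that the open ball of radius $\R$ centred at $c=x+\R u$ misses $X$, which gives $|p-c|\ge\R-\delta$. Your preliminary checks ($r+\delta<\R$, $r\ge\e$, and $\Delta\ge0$ iff $\sqrt2(\R-\delta)\ge\R+\e$) are correct.

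The gap is the one substantive step. You leave it undone, and your plan for it points the wrong way. The far-out case is trivial, not extremal. If $|y-x|=r+\delta$, the chain $d(y,X)\le|y-p|+d(p,X)\le r+\delta$ is tight. This forces $p=x+\delta u$, and then $B(p,r)$ already contains all of $[x,y]$. The constraint gets harder as $|y-x|$ \emph{decreases}. So a monotonicity argument anchored at $|y-x|\approx r+\delta$ cannot produce the stated quadratic.

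The computation is short. Put $s=|y-x|$, $z=x+(r-\e)u$ and $a=\langle p-x,u\rangle$. If $s\le r-\e$, then $B(q,r)\supseteq[x,y]$. Otherwise $[x,z]\subseteq B(q,r)$, and by convexity it suffices that $z\in B(p,r)$. Expanding,
\[
|p-z|^2-|p-y|^2=(s-r+\e)(2a-s-r+\e),\qquad |p-c|^2-|p-y|^2=(\R-s)(\R+s-2a).
\]
The first identity shows that $2a\le s+r-\e$ suffices. The second, combined with $|p-y|\le r$, $|p-c|\ge\R-\delta$ and $s\le r+\delta<\R$, gives $2a\le\R+s-\big((\R-\delta)^2-r^2\big)/(\R-s)$. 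Hence it suffices that $(\R-r+\e)(\R-s)\le(\R-\delta)^2-r^2$.

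The left side decreases in $s$, so the binding case is $s\downarrow r-\e$. There $p-y\perp u$, with $p$ lying on both spheres $\partial B(y,r)$ and $\partial B(c,\R-\delta)$. The condition becomes $(\R-r+\e)^2+r^2\le(\R-\delta)^2$, that is, $r^2-(\R+\e)r+\tfrac12\big((\R+\e)^2-(\R-\delta)^2\big)\le0$. Its root interval is exactly $\big[\tfrac12(\R+\e-\sqrt{\Delta}),\tfrac12(\R+\e+\sqrt{\Delta})\big]$.

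The same lines, run with strict inequalities, handle the open balls of the paper's \v{C}ech complex directly. So no adjustment of $r$ at the endpoints of the closed interval is needed.
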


More suitable for our purposes is the following:

\begin{corollary}
\label{CorRecon1}
    Let $X \subset \RR^n$ be a subset of positive reach and let $S\subset \RR^n$ be a sample of $X$. Assume the Cech persistent homology of $S$ contains no bars being born or dying on the interval $[a,b]$. If:
    \begin{enumerate}
        \item $\rch(X) \geq a/(2-\sqrt{2})$, and
        \item $d_H(S,X)\leq  (3-2\sqrt{2})\rho$, where $\rho:= \min \left\{ 
        \rch(X),b/(2-\sqrt{2})
        \right\},$
    \end{enumerate}
    then $\rho(2-\sqrt{2}) \in [a,b]$ and $\Cech(S,\rho(2-\sqrt{2}))\simeq X$. In particular, the PH of the sample set $S$ on $[a,b]$ reveals the homology of $X$. 

    Additionally, if Cech-PH contains a birth or death on each of the $[a',a], [b, b']$ for some $[a',b'] \supset [a,b]$, then
    \[
    \rch(X)+d_H(S,X) - \sqrt{2(\rch(X) -d_H(S,X))^2 - (\rch(X) + d_H(S,X))^2} > 2a',
    \]
    \[
    \rch(X)+d_H(S,X) + \sqrt{2(\rch(X) -d_H(S,X))^2 + (\rch(X) + d_H(S,X))^2} < 2b'.
    \]
\end{corollary}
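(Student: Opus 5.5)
The plan is to apply Theorem \ref{Recon1} with $\e=\delta=d_H(S,X)$ and a suitably chosen $\R\le \rch(X)$. The natural choice is $\R=\rho=\min\{\rch(X), b/(2-\sqrt{2})\}$, which is positive and at most $\rch(X)$. The first task is to check the sampling hypothesis of Theorem \ref{Recon1}. With $\e=\delta$ it reads $\e(1+\sqrt2)\le(\sqrt2-1)\rho$, which is equivalent to $\e\le(\sqrt2-1)^2\rho=(3-2\sqrt2)\rho$. That is exactly assumption (2). We also get $\e<\rho$ automatically.

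Next I will check that the scale $r_0:=\rho(2-\sqrt2)$ lies in the reconstruction interval. At the extremal sampling value $\e=(3-2\sqrt2)\rho$, one has $\Delta=2(\rho-\e)^2-(\rho+\e)^2=0$. The interval then collapses to the single point $\tfrac12(\rho+\e)=\tfrac12\rho(4-2\sqrt2)=\rho(2-\sqrt2)$. For smaller $\e$, I need $r_0$ to stay between $\tfrac12(\rho+\e\mp\sqrt\Delta)$, that is, $|2r_0-\rho-\e|\le\sqrt\Delta$. Writing $\e=u\rho$, this becomes a quadratic inequality in $u$ on $[0,3-2\sqrt2]$. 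I expect it to hold, with equality only at the endpoint. This is the step I expect to be the main computational obstacle. If it fails near $u=0$, I would instead choose $r$ inside the interval and use the absence of births and deaths on $[a,b]$ to transfer the conclusion to $r_0$.

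Then I show $r_0\in[a,b]$. Since $\rho\le b/(2-\sqrt2)$, we get $r_0\le b$. If $\rho=\rch(X)$, then $r_0\ge a$ by assumption (1). Otherwise $r_0=b\ge a$. Theorem \ref{Recon1} now gives $\Cech(S,r_0)\simeq X$. Because no bar is born or dies in $[a,b]$, all inclusions $\Cech(S,r)\hookrightarrow\Cech(S,r')$ with $a\le r\le r'\le b$ induce isomorphisms on homology. So the homology read off from the persistence diagram on $[a,b]$ equals $H_*(X)$.

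For the additional part, I apply Theorem \ref{Recon1} with $\R=\rch(X)$ and $\e=\delta=d_H(S,X)$. The conclusion is that $\Cech(S,r)\simeq X$ on the whole interval $I=[\tfrac12(\R+\e-\sqrt\Delta),\tfrac12(\R+\e+\sqrt\Delta)]$. Along $I$, the inclusions should then induce isomorphisms, so no bar is born or dies in the interior of $I$. I would justify this by invoking the stronger form of the reconstruction result, in which the neighborhoods $S^r$ all deformation retract onto $X$ compatibly. Since there is a birth or death in $[a',a]$ and in $[b,b']$, while $I$ must meet $[a,b]$ by the first part, $I$ cannot reach past them. This gives $\tfrac12(\R+\e-\sqrt\Delta)>a'$ and $\tfrac12(\R+\e+\sqrt\Delta)<b'$, which are the stated inequalities after doubling. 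I would note that the sign inside the second square root appears to be a typo and should match $\Delta$. Compatibility of the homotopy equivalences with the inclusions is the delicate point here.
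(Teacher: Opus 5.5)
Your argument is correct once two small steps are filled in. Its skeleton is the paper's: apply Theorem \ref{Recon1} with $\R=\rho$ and $\e=\delta$, identify $\rho(2-\sqrt2)$ as a reconstruction scale, and place it in $[a,b]$ by the case split on whether $\rho=\rch(X)$.

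\textbf{Main part.} The one difference is your choice $\e=\delta=d_H(S,X)$. The paper instead takes $\e=\delta=(3-2\sqrt2)\rho$. This is allowed because the sampling hypotheses $X\subseteq S^\e$, $S\subseteq X^\delta$ are only upper bounds, and assumption (2) gives $d_H(S,X)\le(3-2\sqrt2)\rho$. With that choice the sampling inequality holds with equality and $\Delta=0$. The reconstruction interval is then the single point $\tfrac12(\rho+\e)=\rho(2-\sqrt2)$, so the step you flagged as the main obstacle never arises.

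Your route also works, but you should finish the computation rather than leave it as an expectation. Write $\e=u\rho$ and $c=3-2\sqrt2$, so that $c^2=6c-1$. The condition $(c-u)^2\le u^2-6u+1$ loses its $u^2$ terms and becomes $(6-2c)u\le 1-c^2=c(6-2c)$, i.e.\ exactly $u\le c$. So the fallback is unnecessary; it would in any case only have transferred homology, not the homotopy equivalence at $r_0$. What your choice buys is a nondegenerate interval, which is what the additional part needs anyway.

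\textbf{Additional part.} Here the paper only says that the reconstruction interval must have its endpoints in the interior of $[a',b']$. Your version is more careful, and your caveat is real. Theorem \ref{Recon1} as stated gives $\Cech(S,r)\simeq X$ scale by scale, which by itself does not exclude births and deaths inside $I$. You need equivalences compatible with the inclusions, e.g.\ the nearest-point retraction onto $X$ combined with a functorial nerve lemma.

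You should add one step. When $\rho<\rch(X)$, the claim that $I$ (built with $\R=\rch(X)$) contains $r_0$ is not literally the first part. It holds because, on the admissible range $\R\ge(3+2\sqrt2)\e$, the endpoints $\tfrac12\bigl(\R+\e\mp\sqrt{\R^2-6\R\e+\e^2}\bigr)$ are decreasing and increasing in $\R$, respectively. Hence the interval for $\rch(X)$ contains the one for $\rho$.

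You are right that the plus sign under the second square root is a typo for $\Delta$. One last point: with the open Cech convention, a change at $q\in(b,b']$ only forces the right endpoint of $I$ to be at most $q$. So the strict inequality at $b'$ needs $q<b'$; the paper's one-line argument has the same looseness.
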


\begin{proof}
    A verification of Theorem \ref{Recon1}. If we choose $\e=\delta=(3-2\sqrt{2})\R$, it is easy to verify that in the notation of Theorem \ref{Recon1}, (1) holds with equality and $\Delta=0$. The Theorem thus implies that $\Cech(S,(\R+\e)/2) \simeq X.$

    Now set $\R=\rho$ and $\e=(3-2\sqrt{2}) \rho$ and observe:
    \begin{itemize}
        \item $\R \leq \rch(X)$ as required by the Theorem.
        \item $d_H(S,X)\leq \e$ as required by the Theorem.
    \end{itemize}

    Thus the reconstruction scale $(\R+\e)/2$ for which the Theorem implies $\Cech(S,(\R+\e)/2) \simeq X$ satisfies:
    \[
    (\R+\e)/2 =(\R + (3-2\sqrt{2})\R)/2=\R(2-\sqrt{2})=\rho(2-\sqrt{2}), 
    \]
    which is at least $a$ by (1) and (2), and at most $b$ by (2).

    The additional conclusion follows from the fact that the reconstruction interval of the Theorem has to have the endpoints contained in the interior of $[a', b']$.
\end{proof}

If we want to extract the homology of a Euclidean subset $X$ from a sample $S$, as the collection of long bars of $PD(S)$, we may use Theorem 3.5 of \cite{CO}. That is, by the stability result, if all the bars of PD of $X$ are of length more than $4\e$, and the sample $S$ satisfies $d_H(X,S)<\e$, then the bars of $PD(X)$ are exactly the bars of $PD(S)$ that are of length more than $2\e - d_H(X,S)$, with each of the bar endpoints being changed by at most $\e$.

However, in general it is hard to put a reasonable assumption on the length of the bars in a PD. The statement in \cite{CO} is phrased in terms of the so-called weak feature size ($wfs$). Using the more restrictive, but geometrically simpler notion of reach introduced above, this statement reads as follows. We note that $\rch \leq wfs$.

\begin{theorem}\label{Recon3}
[Theorem 3.5 of \cite{CO}]
     Let $X \subset \RR^n$ be a compact subset  of positive reach. Assume that $S\subset \RR^n$ is a sample approximating $X$, so that $d_H(X,S) < \e < \frac{\rch(X)}{4}$. Then for each $\alpha, \alpha' \in [\e, \rch(X)-\e]$ with $\alpha' - \alpha > 2\e$, we have 
     \[
        H_k(X) \cong \im \Big(H_K(\Cech(S,\alpha))\to H_K(\Cech(S,\alpha'))\Big).
     \]
     In particular, $H_k(X)$ is represented by the bars of $PD(S)$ in dimension $k$ which contain the interval $[\alpha, \alpha']$. 
\end{theorem}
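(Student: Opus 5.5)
Since $\rch(X)\le wfs(X)$, the plan is to derive the statement from the version of \cite{CO} phrased in terms of weak feature size, filling in the few steps that connect it to the Cech filtration used here.

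First I will identify the Cech complexes with thickenings. In $\RR^n$ the nerve theorem gives $\Cech(S,r)\simeq$ the open $r$-neighbourhood of $S$, and this equivalence is compatible with the inclusions. Hence the image $\im\big(H_k(\Cech(S,\alpha))\to H_k(\Cech(S,\alpha'))\big)$ equals the image of $H_k$ of one offset of $S$ in the other. The difference between open and closed offsets is harmless: every inequality below is strict or has slack.

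Next comes the interleaving. Set $\eta=d_H(X,S)<\e$. We have the chain of inclusions
\[
S^{\alpha-\eta}\subseteq X^{\alpha}\subseteq S^{\alpha+\eta}\subseteq X^{\alpha+2\eta}\subseteq X^{\alpha'-\eta}\subseteq S^{\alpha'},
\]
where $\alpha'-\alpha>2\e>2\eta$ makes the middle step valid. For $0<r\le r'<\rch(X)$, the offset $X^r$ deformation retracts onto $X$ via the nearest-point projection $\pi_X$, by Federer's theory of sets of positive reach. Therefore every inclusion $X^r\hookrightarrow X^{r'}$ in that range is a homotopy equivalence, compatible with the retractions to $X$. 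The endpoint condition $\alpha'\le \rch(X)-\e$ guarantees that all $X$-offsets in the chain have radius below $\rch(X)$. The condition $\alpha\ge\e$ guarantees that $\alpha-\eta>0$.

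Then comes a linear-algebra step. Apply $H_k$ to the chain and write $f\colon H_k(S^\alpha)\to H_k(S^{\alpha'})$ for the induced map. The composite $H_k(X^\alpha)\to H_k(X^{\alpha'-\eta})$ is an isomorphism, and it factors through $f$. So $f$ has rank at least $\dim H_k(X)$, and the map $H_k(X^\alpha)\to \im f$ is injective. Conversely, $f$ factors as
\[
H_k(S^\alpha)\to H_k(X^{\alpha+\eta})\to H_k(S^{\alpha'}).
\]
Here I use the analogous chain shifted by $\eta$: $S^{\alpha}\subseteq X^{\alpha+\eta}\subseteq X^{\alpha'-\eta}\subseteq S^{\alpha'}$. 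Combined with the isomorphism $H_k(X^{\alpha+\eta})\cong H_k(X)$, this shows that $\im f$ is the image of a space isomorphic to $H_k(X)$ under a map that the first factorization shows to be injective. Hence $\im f\cong H_k(X)$. The bar statement follows from the interval decomposition: the rank of $f$ counts exactly the bars containing $[\alpha,\alpha']$.

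The main obstacle is the careful bookkeeping of the shifted chains. I must check that the two factorizations use compatible maps, so that injectivity and surjectivity refer to the same image. I also must verify the radius bounds, namely $\alpha'+\eta\le\rch(X)$ where needed; this is where the hypotheses $\e<\rch(X)/4$ and $\alpha'\le\rch(X)-\e$ are consumed. If this direct route becomes messy, I will simply cite \cite{CO} together with $\rch\le wfs$.
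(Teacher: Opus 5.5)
The paper does not prove this statement. It quotes Theorem 3.5 of \cite{CO}, which is phrased with the weak feature size and concludes $H_k(X^\lambda)\cong\im(\cdots)$ for small offsets $X^\lambda$. It then passes to the stated form via $\rch\le wfs$ and, implicitly, $X^\lambda\simeq X$ for $\lambda<\rch(X)$; this is your fallback. Your direct route is essentially the interleaving (sandwich) argument behind \cite{CO}, with Federer's retraction onto $X$ replacing the wfs isotopy lemma. It would have the merit of making both implicit steps explicit.

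However, the chain you actually wrote does not work, in two places.
\begin{itemize}
\item The step $X^{\alpha+2\eta}\subseteq X^{\alpha'-\eta}$ needs $\alpha'-\alpha\ge 3\eta$, which does not follow from $\alpha'-\alpha>2\e$. Take $\e=1$, $\eta=0.9$, $\rch(X)=5$, $\alpha=1$, $\alpha'=3.1$: all hypotheses hold, yet $\alpha+2\eta=2.8>2.2=\alpha'-\eta$.
\item More seriously, the composite $H_k(X^\alpha)\to H_k(X^{\alpha'-\eta})$ in your chain passes through $H_k(S^{\alpha+\eta})$, not through $f\colon H_k(S^\alpha)\to H_k(S^{\alpha'})$. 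There is no inclusion $X^\alpha\subseteq S^\alpha$, and $S^{\alpha'}$ comes after $X^{\alpha'-\eta}$, not before it. So there is no map $H_k(X^\alpha)\to\im f$.
\end{itemize}
Consequently neither the lower bound $\operatorname{rank} f\ge\dim H_k(X)$ nor the injectivity of $H_k(X^{\alpha+\eta})\to H_k(S^{\alpha'})$ that you invoke at the end is established. Only your second factorization, giving $\operatorname{rank} f\le \dim H_k(X)$, is correct as written.

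The repair is to interleave symmetrically around $f$:
\[
X^{\alpha-\eta}\subseteq S^{\alpha}\subseteq X^{\alpha+\eta}\subseteq S^{\alpha'}\subseteq X^{\alpha'+\eta}.
\]
This needs exactly three conditions:
\begin{itemize}
\item $\alpha-\eta>0$, which follows from $\alpha\ge\e>\eta$;
\item $\alpha+2\eta<\alpha'$, which follows from $\alpha'-\alpha>2\e$;
\item $\alpha'+\eta<\rch(X)$, which follows from $\alpha'\le\rch(X)-\e$.
\end{itemize}
With the open offsets of the paper's convention, replace $\alpha-\eta$ by $\alpha-\eta'$ for some $\eta'\in(\eta,\e)$.

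The inclusions $X^{\alpha-\eta}\hookrightarrow X^{\alpha+\eta}$ and $X^{\alpha+\eta}\hookrightarrow X^{\alpha'+\eta}$ are homotopy equivalences by the retraction onto $X$. Hence $H_k(S^\alpha)\to H_k(X^{\alpha+\eta})$ is surjective and $H_k(X^{\alpha+\eta})\to H_k(S^{\alpha'})$ is injective, so $\im f\cong H_k(X^{\alpha+\eta})\cong H_k(X)$. Equivalently, $\dim H_k(X)\le\operatorname{rank} f\le \dim H_k(X^{\alpha+\eta})$, which is the sandwich lemma of \cite{CO}.

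Note also that $\e<\rch(X)/4$ is not consumed by any radius bound. It only guarantees that admissible pairs $\alpha,\alpha'$ exist.
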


Theorem \ref{Recon3} demonstrates how to extract the homology of $X$ from the homology of $PH(S)$ persisting through the interval $(\alpha, \alpha')$, i.e.,  by ignoring the bars that do not contain the interval $(\alpha, \alpha')$.
The following is a rephrasing suited to our setup.

\begin{corollary}
\label{CorRecon3}
Let $X \subset \RR^n$ be a compact subset  of positive reach. Assume that $S\subset \RR^n$ is a sample approximating $X$ and that for some positive $\alpha < \alpha'$ we have 
\[
       \widetilde H = \im \Big(H_K(\Cech(S,\alpha))\to H_K(\Cech(S,\alpha'))\Big).
     \]
     If 
     \[
    d_H(X,S)<\min \{\alpha, (\alpha' - \alpha)/2\}, \quad \textrm{ and } \quad \rch(X)> \max \{ 4 d_H(X,S), \alpha' + d_H(X,S)\},
     \]
     then $H_*(X)\cong \widetilde H$.
\end{corollary}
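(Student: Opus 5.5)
The plan is to derive Corollary \ref{CorRecon3} directly from Theorem \ref{Recon3}. The work consists of choosing the auxiliary parameter $\e$ of that theorem so that every one of its hypotheses follows from the two inequalities assumed here. Write $h:=d_H(X,S)$. Theorem \ref{Recon3} requires a number $\e$ with
\[
h<\e<\frac{\rch(X)}{4},\qquad \alpha,\alpha'\in[\e,\rch(X)-\e],\qquad \alpha'-\alpha>2\e .
\]
Once such an $\e$ exists, the theorem gives $H_k(X)\cong \im\big(H_k(\Cech(S,\alpha))\to H_k(\Cech(S,\alpha'))\big)=\widetilde H$ in every dimension $k$. This is exactly the claim $H_*(X)\cong\widetilde H$.

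To find $\e$, I rewrite each requirement as a strict upper bound on $\e$, using that $\alpha<\alpha'$:
\begin{itemize}
\item $\e<\rch(X)/4$;
\item $\e\le\alpha$, which also gives $\e\le\alpha'$;
\item $\e\le\rch(X)-\alpha'$, which also gives $\alpha\le\rch(X)-\e$;
\item $\e<(\alpha'-\alpha)/2$.
\end{itemize}
The lower bound is $\e>h$. So I need $h$ to lie strictly below each of the quantities $\rch(X)/4$, $\alpha$, $\rch(X)-\alpha'$ and $(\alpha'-\alpha)/2$. These follow from the hypotheses:
\begin{itemize}
\item $\rch(X)>4h$ gives $h<\rch(X)/4$;
\item $\rch(X)>\alpha'+h$ gives $h<\rch(X)-\alpha'$;
\item $h<\min\{\alpha,(\alpha'-\alpha)/2\}$ gives the remaining two.
\end{itemize}
Since all four inequalities are strict, the minimum $m$ of these four quantities satisfies $h<m$. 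Taking $\e=(h+m)/2$, or any value in the open interval $(h,m)$, satisfies every hypothesis of Theorem \ref{Recon3}.

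No step is a genuine obstacle; the only care needed is bookkeeping. First, the strict inequality $h<\e$ must coexist with the other constraints, and it does because every hypothesis is strict. Second, both the closed-interval memberships $\alpha,\alpha'\in[\e,\rch(X)-\e]$ and the gap condition $\alpha'-\alpha>2\e$ must actually be checked. Third, the case $h=0$ needs a brief note: then $X=S$ up to closure, and the argument still goes through with any small positive $\e$.
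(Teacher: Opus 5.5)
Your proposal is correct and follows the paper's proof. Both arguments exhibit an $\e$ strictly between $d_H(X,S)$ and the minimum of $\alpha$, $(\alpha'-\alpha)/2$, $\rch(X)/4$ and $\rch(X)-\alpha'$ (the paper phrases this as shrinking $\e$ toward $d_H(X,S)$), and then apply Theorem \ref{Recon3}. Your explicit choice $\e=(h+m)/2$ is slightly cleaner, and your separate remark on the case $h=0$ is unnecessary, since $m>0$ in every case.
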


\begin{proof}
    We verify that the assumptions of Theorem \ref{Recon3} indeed hold. The assumptions on the parameters of the theorem are:
    \begin{enumerate}
        \item $d_H(X,S) < \e < \frac{\rch(X)}{4}$
        \item $\alpha, \alpha' \in [\e, \rch(X)-\e]$
        \item $\alpha' - \alpha > 2\e$.
    \end{enumerate}
    We have to show that under the assumptions of the Corollary, a suitable choice of $\e$ exists. 

    Any $\e \in (d_H(X,S), \min \{\alpha, (\alpha' - \alpha)/2\})$ satisfies (3) and the first parts of (1) and (2). We can decrease $\e$ towards $d_H(X,S)$ until $\rch(X)> 4\e$, as $\rch(X)> 4d_H(X,S)$, thus confirming (1). We can further decrease $\e$ towards $d_H(X,S)$ until $\rch(X)> \alpha' +\e$, as $\rch(X)> \alpha' +d_H(X,S)$, thus confirming (2).
\end{proof}

Observe that the results above ensure that one can read the homology of a set $X$ by studying the PD of a sample $S$ of it, as long as they are close enough. They depend on $\rch{X}$ and thus it is useful to have a lower bound for this constant. It turns out that such a bound can be found, at least in the case where $X$ is an invariant manifold of a vector field.

\begin{proposition} \label{reach proposition}
Let $X \subset \mathbb{R}^n$ be a compact, smooth submanifold of dimension $m$, which is invariant under the flow of the $C^1$ vector field $F: \mathbb{R}^n \to \mathbb{R}^n$ (i.e. $\forall p\in X,\ F(p)\in T_pX$). Let us suppose that, $\forall p\in X$, there exist $b,L > 0$ such that $\|F(p)\| \ge b$ and $\|DF(p)\|\le L$. Then, the inequality $\text{rch}(X) \ge \frac{b}{2L}$ holds.
\end{proposition}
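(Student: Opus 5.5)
The plan is to use Federer's characterization of reach for closed sets: $\rch(X)\geq R$ holds precisely when
\[
d\big(q-p,\;T_pX\big)\leq \frac{\|q-p\|^2}{2R}\qquad \text{for all } p,q\in X .
\]
I will try to verify this inequality with $R=b/(2L)$. Equivalently, I can use the standard decomposition for compact smooth submanifolds: the reach equals the minimum of a local term, $1/\kappa_{\max}$ with $\kappa_{\max}$ the largest normal curvature (norm of the second fundamental form $\mathrm{II}$), and a global term, half the length of the shortest ``bottleneck'' chord $[p,q]$ with $q-p\perp T_pX$ and $q-p\perp T_qX$. So it suffices to bound each term from below by $b/(2L)$.

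\emph{Local step.} Invariance gives $F(p)\in T_pX$ for every $p\in X$, so $F|_X$ is a tangent vector field. Differentiating the identity $\pi^\perp_p F(p)=0$ along a tangent direction $v\in T_pX$ gives
\[
\pi^\perp_p\, DF(p)v=-\big(D_v\pi^\perp_p\big)F(p)=\mathrm{II}_p(v,F(p)).
\]
Consequently $\|\mathrm{II}_p(v,F(p))\|\leq L\|v\|$. Setting $u=F(p)/\|F(p)\|$ and using $\|F(p)\|\geq b$, this yields $\|\mathrm{II}_p(v,u)\|\leq (L/b)\|v\|$. In particular the normal curvature in the flow direction is at most $L/b$, which is better than the required $2L/b$.

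\emph{Global step.} Suppose $p,q\in X$ form a bottleneck pair with $\|p-q\|<b/L$. Then $\|F(q)-F(p)\|\leq L\|q-p\|<b\leq\|F(p)\|$. The goal is to combine this with the orthogonality of the chord $w=q-p$ to both tangent spaces and derive a contradiction. To do so, I would follow the flow lines through $p$ and $q$ for a short time and show that the function $t\mapsto\|\phi(t,q)-\phi(t,p)\|^2$ cannot have the critical-point behaviour forced at a bottleneck unless $\|w\|\geq b/L$. Its derivative is $2\langle w,F(q)-F(p)\rangle$, which vanishes at $t=0$. The second derivative should be controlled by $L$, $b$ and the curvature bound from the local step. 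That control gives the factor $2$ in $b/(2L)$.

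The main obstacle is the local step: the invariance identity only controls $\mathrm{II}$ in pairs involving the flow direction $u$. The curvature in directions $v,v'$ both transverse to $F$ (for example, on an invariant torus carrying a linear-type flow) does not seem to be determined by $b$ and $L$ alone. I would first try to bound $\mathrm{II}(v,v')$ through the transported frame along the flow, using $\|DF\|\leq L$ to control how tangent planes rotate along orbits. If that fails, the honest fix is to restrict the statement, for instance to $m=1$ (periodic orbits, where the local step already suffices). Alternatively, one can add a hypothesis on the transverse curvature, which is then the form in which I would apply the proposition later in the paper.
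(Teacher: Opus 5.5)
\textbf{There is a genuine gap, and it cannot be closed: the statement itself is false.}

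The obstruction you flag in the local step is real. Take the torus of revolution $X=\{(\sqrt{x^2+y^2}-R)^2+z^2=r^2\}$ with $0<r<R/3$, and $F(x,y,z)=(-y,x,0)$. Then $F$ is tangent to $X$, $\|F\|\ge R-r$ on $X$, and $\|DF\|\equiv 1$. The claimed bound is therefore $(R-r)/2>r$, yet $\rch(X)=r$, because the core circle is the nearest part of the medial axis. The principal curvature $1/r$ has its eigenvector along the meridian, which is orthogonal to $F$. In that direction your identity $\pi^\perp_p DF(p)v=\mathrm{II}_p(v,F(p))$ reads $0=0$.

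The paper's proof uses the same decomposition $\rch(X)=\min\{\rho_X,\eta_X\}$ and the same identity, written as $\langle DF(p)w,\xi\rangle=\kappa\langle F(p),w\rangle$ for an eigenvector $w$ of $A_\xi$. It then passes from $|\kappa|\le L/|\langle F(p),w\rangle|$ to $|\kappa|\le L/\|F(p)\|$ by arguing that the denominator is largest at $w=\pm F(p)/\|F(p)\|$. That step is invalid: $w$ is fixed by the geometry of $X$, not chosen, and in the example above it is orthogonal to $F(p)$. So you stopped exactly where the paper's argument breaks, and your diagnosis is correct.

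Your fallbacks do not rescue the statement, because the bottleneck term is not controlled by $b$ and $L$.
\begin{itemize}
\item Your estimate $\|F(q)-F(p)\|\le L\|q-p\|$ needs a bound on $DF$ along the chord $[p,q]$. The hypothesis only bounds $DF$ on $X$.
\item Even a global Lipschitz bound is not enough. Two invariant circles of $(-y,x)$, of radii $1$ and $1+\delta$, give $b=L=1$ but reach $\delta/2$. This is allowed, since the statement does not assume connectedness.
\item For a connected example with $m=1$, let $F=\partial_\theta+\tfrac12\partial_\varphi$ in toroidal coordinates around the thin torus above. Near $X$ one has $\|F\|\ge R-r$ and $\|DF\|\le 3/2$. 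Every orbit on the torus closes after winding twice around the $z$-axis. Its two strands at equal $\theta$ are antipodal points of a meridian: a mutually normal pair at distance $2r$ whose midpoint lies on the medial axis. So the orbit has reach at most $r$, far below $(R-r)/3\le b/(2L)$ when $r\ll R$.
\item Along such a pair $\|\phi(t,q)-\phi(t,p)\|\equiv 2r$, so your second-derivative argument has nothing to contradict.
\item A transverse-curvature hypothesis would not help with bottlenecks either.
\end{itemize}

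The factor $2$ in the paper comes from the inequality $\eta_X\ge\rho_X/2$, attributed to Aamari et al. It already fails for the two concentric circles, where $\rho_X=1$ and $\eta_X=\delta/2$.

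What does survive, in the setting of the planar examples, is the following. Suppose $n=2$, $X$ is a single periodic orbit, and $F$ is $L$-Lipschitz on the convex hull of $X$. Then $\kappa\le L/\|F\|\le L/b$, since $F$ spans the tangent line. If instead the reach is realized by a bottleneck $(p,q)$, the open chord misses $X$. The Jordan-curve orientation then forces $F(q)$ to be a negative multiple of $F(p)$, so $2b\le\|F(p)-F(q)\|\le 2L\,\rch(X)$. Either way $\rch(X)\ge b/L$.
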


\begin{proof}
Let $\rho_X$ be the maximal radius of curvature of $X$ and 
$$\eta_X=\frac{1}{2}\inf \{\|x-y\|,\ x,y\in X, x\neq y, y\in N_xX, x\in N_yX\},$$
where we denote by $NX$ the normal bundle of $X$. We then have $\text{rch}(X)=\min \{\rho_X,\eta_X\}$ (see \cite{Aamari}). We wish to estimate $\rho_M$. 

Let $p \in X$, $\xi \in N_pX$ with $\|\xi\|=1$ and consider the shape operator $A_{\xi}: T_pX \to T_pX$. Recall that the eigenvalues of $A_{\xi}$ are the principal curvatures of $X$, with respect to the $\xi$ direction. Furthermore, if $w \in T_pX$, it is:
$$ A_{\xi}(w) = -(d\xi \cdot w)^{\top}, $$
where $(\cdot)^{\top}$ stands for the orthogonal projection to $T_pX$.

The vector field $F$ is orthogonal to $\xi$. If $\gamma:I\subset \mathbb{R}\to X$ is a smooth curve, with $\gamma(0)=p$ and $\dot{\gamma}(0)=w$, $(\|w\|=1)$, we differentiate the relation 
$$ \langle F(\gamma(t)), \xi (\gamma(t)) \rangle = 0, $$
with respect to $t$ and evaluate at $t=0$. We get:
\begin{equation*}
\begin{split}
\langle d_pF \cdot w, \xi \rangle + \langle F(p), d_p\xi \cdot w \rangle &= 0\Rightarrow \langle d_pF \cdot w, \xi \rangle = - \langle F(p), (d_p\xi \cdot w)^{\top} \rangle \Rightarrow\\
\Rightarrow \langle d_pF \cdot w, \xi \rangle &=\langle F(p),A_{\xi}(w)\rangle.
\end{split}
\end{equation*}
We have used the equality $\langle F(p), d_p\xi \cdot w \rangle= \langle F(p), (d_p\xi \cdot w)^{\top} \rangle$, which holds because $F$ is tangent to $X$.

Now, if $\kappa$ is an eigenvalue of the shape operator, with corresponding eigenvector  $w \in T_pX$, $\|w\|=1$, we get the equality:
$$ \langle d_pF \cdot w, \xi \rangle = \langle F(p), \kappa w \rangle = \kappa \langle F(p), w \rangle. $$

We therefore have:
$$ |\kappa| \cdot |\langle F(p), w \rangle| = |\langle d_pF \cdot w, \xi \rangle| \le \|d_pF \cdot w\| \cdot \|\xi\| \le L. $$

The curvature $\kappa$ is thus bounded above by $\frac{L}{|\langle F(p), w \rangle|}$. The denominator achieves its maximal value when $w=\pm\frac{F(p)}{\|F(p)\|}$ (recall that $\|w\|=1$) and because of this,
$$|\kappa|\le \frac{L}{\|F(p)\|},$$
which gives us the desired inequallity:
$$ \rho_M = \frac{1}{\kappa_{\max}} \ge \frac{b}{L}. $$

On the other hand, according to \cite{Aamari}, it is $\eta _X\geq \frac{\rho_X}{2}$ and we get the conclusion. 
\end{proof}

We shall use these results, in what follows, to study the limit sets of specific dynamical systems. 
\section{Applications}
In this section we apply the machinery developed above to study limit sets of specific dynamical systems. Under mild assumptions on the reach of the limit set, we prove that these sets have specific geometric properties. In this way, we can prove the existence of limit cycles and tori, in the corresponding phase spaces.
\subsection{Limit cycle in the van der Pol oscillator}
Consider the following system:
\begin{eqnarray}
\begin{cases}
\label{van der pol system}
\dot{x}=y \\
\dot{y}=\mu (1-x^2)y-x
\end{cases},
\end{eqnarray}
which is the well known van der Pol system. For $\mu=0.1$, this system possesses a stable limit cycle that surrounds the origin, which is an unstable fixed point. In figure \ref{fig1}, we present the phase portrait of system (\ref{van der pol system}). 
\begin{figure}[h]
\begin{center}
\includegraphics[scale=0.4]{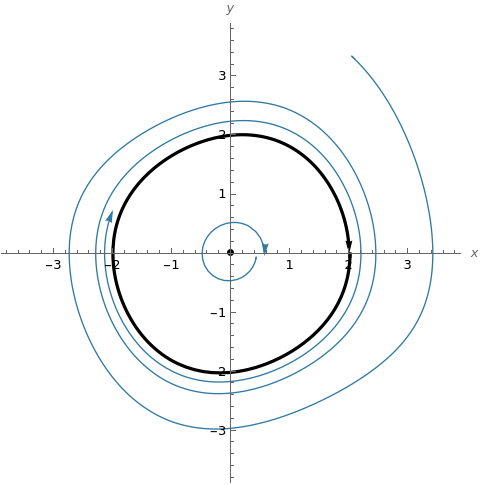}
\caption{Numerically obtained solutions for the van der Pol system: the unstable fixed point at the origin, the sought after periodic orbit (thick), an orbit which spirals away from the fixed point and a third orbit, which comes from infinity. The last two orbits tend to the periodic orbit.}
\label{fig1}
\end{center}
\end{figure}
A proof for the existence of the stable periodic orbit, depicted in figure \ref{fig1}, can be found in \cite{Hirsch-Smale-Devaney}.

We apply to system (\ref{van der pol system}) the methodology described above as follows.
\begin{enumerate}
\item {We numerically integrate the system (\ref{van der pol system}), with initial conditions $(x_0,y_0)=(0,10)$, for $t\in [0,1000]$. We depict this solution, for $t\in [900,1000]$, in figure \ref{fig2}. This is an orbit segment which, as described in section 2, we assume to be close to the $\omega$-limit of the orbit, in the sense of Hausdorff distance.
\begin{figure}[h]
\begin{center}
\includegraphics[scale=0.3]{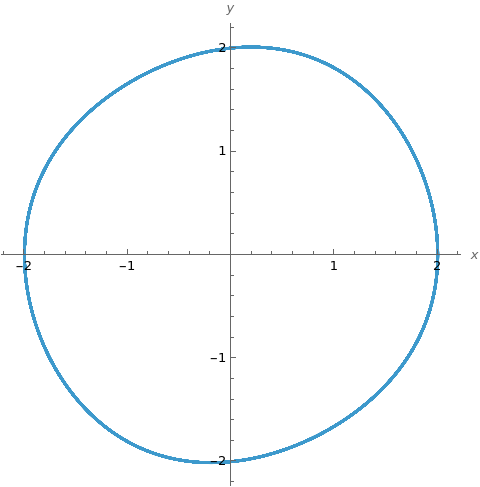}
\hspace*{2.cm}
\includegraphics[scale=0.3]{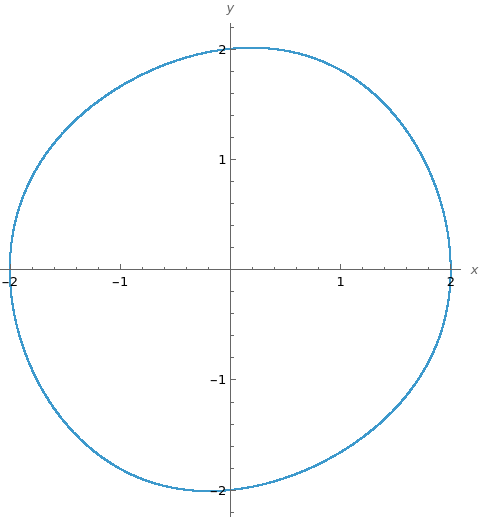}
\caption{Left: Numerically obtained solution for system (\ref{van der pol system}), with initial conditions $(0,10)$. Here $t\in [900,1000]$. Right: The sample set of the previous solution. For details, see the text.}
\label{fig2}
\end{center}
\end{figure}
}
\item {We sample points from this numerical solution: we recorded every point of it, from $t=100$ to $t=1000$, with a time-step of $0.1$. This set, S, of points is also depicted in figure \ref{fig2}.
}
\item {The minimum distance between these points of S was found to be $0.0017...$, while the maximum was $4.13...$. Thus we computed the persistent diagram for this set, using the Cech complex, with $\epsilon\in [0,4]$. The result is shown in figure \ref{fig4}.
\begin{figure}[h]
\begin{center}
\includegraphics[scale=0.5]{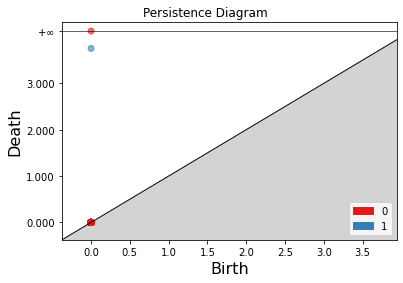}
\caption{Persistence diagram, for the sample set S depicted in figure \ref{fig2}, obtained via the Cech complex.}
\label{fig4}
\end{center}
\end{figure}
}
\end{enumerate}
In this persistence diagram, we observe the following.
\begin{enumerate}
    \item The interval of persistence of the $0$-dim bar is $(0,\infty)$.
    \item The interval for the $1$-dim bar is $(a,b)$, where $a\leq 0.1, b \geq 3.5$.
    \item All the other bars die before $0.1$.
\end{enumerate}
Given that the involved Cech complexes are homotopy equivalent to finite planar simplicial complexes, we conclude that the inclusions of the Cech complexes are homotopy equivalences of $\mathbb{S}^1$ on the interval $[0.1,3.5]$.

\begin{proposition}
\label{Prop2Da}
Let $X$ be the $\omega$-limit set of the orbit of system (\ref{van der pol system}) having as initial condition the point $(0,10)$ and S the sample point set described above. Assume that $\rch(X)>0$ and $d_H(X,S)\leq \e$.  
\begin{enumerate}
    \item The bottleneck distance between the geometric descriptors $PD(X)$ and $PD(S)$ is at most $\e$.
    
    \item Let $t<\infty$ be the largest finite death time of a 0-dim bar of $PD(S)$. Then $X$ cannot be represented as the disjoint union of two subspaces which are more than $2t + 2\e$ apart. 
    
    \item Let $t<\infty$ be the largest finite death time of a 0-dim bar of $PD(S)$. If $X$ is path-connected, then $d_H(S,X)\geq t$.
    
    \item If we further assume that $\rch(X)\geq \frac{0.1}{2-\sqrt{2}}\approx 0.17071$ and             \[
            \e \leq \min\left\{(3-2\sqrt{2})\rch(X),(3-2\sqrt{2}) \frac{3.5}{2-\sqrt{2}} \right\}\approx \min\left\{0.17157\rch(X),1.025126 \right\},
            \]
then $X\simeq \mathbb{S}^1$, i.e., the limit set is a circle up to homotopy.

    \item Let $t=4.13$ be the largest distance between the points of $S$. If $\e< 4.12/2$, then the limit set can't be a point.

    \item If $\e < \frac{3.5-0.1}{2}$, then $X$ can't be star-like (and in particular, it can't be convex).
    
\end{enumerate}
\end{proposition}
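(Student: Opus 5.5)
The plan is to specialize the general results of Section 3 to $S$ and $X$. Throughout we take $X$ compact and nonempty: it is the $\omega$-limit of a bounded orbit, and we assume the numerical orbit is relatively compact, as in Section 2. Then $X$ and the finite set $S$ are compact subsets of $\RR^2$, and Theorem \ref{ThmStab} and Proposition \ref{CorStab} apply with $S'=X$.

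Items (1), (2) and (3) are then verbatim the corresponding items of Proposition \ref{CorStab}. For (1) we invoke the Stability Theorem directly. For (2) and (3) we invoke items (2) and (3) of Proposition \ref{CorStab} with $S'=X$.

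Item (5) follows from item (4) of Proposition \ref{CorStab}, since $S$ is finite and hence discrete. The stated hypothesis $\e<4.12/2$ implies $\e<t/2$ with $t=4.13$, so $X$ cannot be a point. Item (6) follows from item (5) of Proposition \ref{CorStab} together with Lemma \ref{LemmaStar}. The observed $1$-dimensional bar $(a,b)$, with $a\le 0.1$ and $b\ge 3.5$, is a finite bar of lifespan at least $3.4$. If $\e<3.4/2$, then $X$ is not star-shaped: a star-shaped $X$ would have a trivial diagram, and matching that bar to the diagonal costs at least $(b-a)/2\ge 1.7>\e$, contradicting stability.

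The substantive step is (4), which we derive from Corollary \ref{CorRecon1} with $[a,b]=[0.1,3.5]$.

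First, we check the hypothesis on bars. From the observations after Figure \ref{fig4}, no bar of $PD(S)$ is born or dies in $[0.1,3.5]$. The essential $0$-bar lives on $(0,\infty)$. All other bars die before $0.1$ and are born before that. The $1$-bar is born at $a\le 0.1$ and dies at $b\ge 3.5$. Strictly, births or deaths exactly at the endpoints must be excluded; we would accept the numerical reading, or shrink the interval slightly.

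Second, we check conditions (1) and (2) of the Corollary. Condition (1) is the assumption $\rch(X)\ge 0.1/(2-\sqrt2)$. Condition (2) reads $d_H(S,X)\le(3-2\sqrt2)\min\{\rch(X),3.5/(2-\sqrt2)\}$. Since $3-2\sqrt2>0$, this minimum equals the minimum in the stated bound on $\e$, so it follows from $d_H\le\e$.

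Third, we read off the conclusion. The Corollary gives $\Cech(S,\rho(2-\sqrt2))\simeq X$ at a scale $\rho(2-\sqrt2)\in[0.1,3.5]$. By the remark preceding the proposition, $\Cech(S,r)\simeq\mathbb S^1$ for every $r$ in that interval. Hence $X\simeq\mathbb S^1$.

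The main obstacle is that last identification. The persistence diagram only tells us that the homology of $\Cech(S,r)$ equals that of a circle on $[0.1,3.5]$, not its homotopy type. To upgrade this, we use that $\Cech(S,r)$ is homotopy equivalent, via the nerve theorem, to the open $r$-neighbourhood of $S$ in $\RR^2$. This is an open planar set, connected and with first Betti number $1$. We would then argue that such a planar open set with these Betti numbers is homotopy equivalent to $\mathbb S^1$. It deformation retracts onto a finite planar graph (or polygonal region) with the same homology, a connected graph with one cycle is $\simeq\mathbb S^1$, and planar domains have no higher homology. If a cleaner route is wanted, homology alone already yields the weaker conclusion $H_*(X)\cong H_*(\mathbb S^1)$ via Corollary \ref{CorRecon3}.
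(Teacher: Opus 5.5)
Your proposal is correct and is essentially the paper's proof: items (1)--(3), (5) and (6) are direct applications of Proposition~\ref{CorStab} with $S'=X$. Item (4) follows from Corollary~\ref{CorRecon1} on $[0.1,3.5]$, together with the nerve theorem and the fact that a connected planar set with first Betti number $1$ is homotopy equivalent to $\mathbb{S}^1$. Your extra care about compactness of $X$, a possible birth exactly at $0.1$, and the homology-to-homotopy upgrade only makes explicit what the paper leaves implicit. The fallback via Corollary~\ref{CorRecon3} is unnecessary, and it rests on different hypotheses than item (4); Corollary~\ref{CorRecon1} already gives $X\simeq\Cech(S,r)$ for some $r\in[0.1,3.5]$.
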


\begin{proof}
    (4) By Corollary \ref{CorRecon1}, $X$ is homotopy equivalent to the Cech complex of $S$ at scale from $[0.1, 3.5]$, which, by the nerve theorem, is homotopy equivalent to a connected planar simplicial complex with the first Betti number being $1$, hence it is $\mathbb{S}^1$ up to homotopy.

    All the other items are direct applications of Corollary \ref{CorStab}.
\end{proof}

Let us focus our attention to point (4) of proposition above. By using proposition \ref{reach proposition}, one can show that every invariant set of the system (\ref{van der pol system}) contained in the annulus $\{(x,y)\in \mathbb{R}^2, 1\leq x^2+y^2\leq 9\}$ must have reach greater than $0.22$, thus the first assumption holds. The second assumption ensures that $S$ is a ``good enough" sample, in the sense that the orbit segment we sampled indeed has approached its limit set. Since system (\ref{van der pol system}) has a unique fixed point, located at the origin, while the $\omega$-limit set of any orbit of any (smooth) planar system cannot be an annulus, we can state the following:
\begin{corollary}
    Let $X$ be the $\omega$-limit set of the orbit of system (\ref{van der pol system}) having as initial condition the point $(0,10)$ and S the sample point set described above. Assume that $d_H(X,S)\leq 0.037$. Then $X$ is a closed orbit. In particular, the system (\ref{van der pol system}) possesses a periodic orbit, surrounding the origin.
\end{corollary}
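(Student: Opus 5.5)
The argument chains Proposition \ref{Prop2Da}(4), the reach estimate of Proposition \ref{reach proposition}, and the Poincaré--Bendixson theorem. First we verify the hypotheses of item (4). By the remark after Proposition \ref{Prop2Da}, every invariant set of system (\ref{van der pol system}) inside the annulus $\{1\le x^2+y^2\le 9\}$ has reach greater than $0.22$. So we must first locate $X$ in that annulus. Since $d_H(X,S)\le 0.037$, every point of $X$ lies within $0.037$ of a point of $S$. The sample points (after the transient) lie at radius roughly between $1.1$ and $2.9$, read off from the data in figure \ref{fig2}. Hence $X$ lies in the annulus, and $\rch(X)>0.22\ge 0.1/(2-\sqrt2)\approx 0.1707$.

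With $\rch(X)>0.22$ we get $(3-2\sqrt2)\rch(X)>0.0377>0.037$, and also $0.037<1.025$. So $\e=0.037$ meets the bound in Proposition \ref{Prop2Da}(4), which gives $X\simeq\mathbb S^1$. This numerical check is exactly where the constant $0.037$ comes from.

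Now we identify $X$ dynamically. The orbit of $(0,10)$ is forward bounded, since it stays near $S$ and hence in a compact region, so $\omega$-limit theory applies. By Poincaré--Bendixson, a nonempty compact $\omega$-limit set of a planar $C^1$ flow is one of three things: a fixed point, a periodic orbit, or a union of fixed points joined by connecting orbits. The only fixed point of (\ref{van der pol system}) is the origin, and $X$ lies in the annulus, so $X$ contains no fixed point. Hence $X$ is a closed orbit. The homotopy type $\mathbb S^1$ is consistent with this, and item (6) rules out degenerate alternatives.

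Finally, the periodic orbit surrounds the origin. A closed orbit of a planar flow bounds a disk containing a fixed point, by the index theorem. The origin is the only candidate, so the orbit encircles it.

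The main obstacle is the first step. To place $X$ inside the annulus we have to read explicit radial bounds on $S$ from the numerical data. We also have to make sure that the hypothesis $\|F\|\ge b$ of Proposition \ref{reach proposition} holds on the annulus, since it is what produces the reach bound of $0.22$. If these bounds are taken from the paper's remark, the rest is bookkeeping.
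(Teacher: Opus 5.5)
Your proposal is correct and follows the same route as the paper. The annulus reach bound $\rch(X)>0.22$ from Proposition~\ref{reach proposition} makes $0.037<(3-2\sqrt2)\cdot 0.22$ admissible in Proposition~\ref{Prop2Da}(4), and Poincar\'e--Bendixson with the origin as the unique equilibrium then gives a closed orbit; your index argument supplies the ``surrounding the origin'' part that the paper leaves implicit.

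As your own last step shows, once $d_H(X,S)\le 0.037$ places $X$ in an equilibrium-free annulus, Poincar\'e--Bendixson alone already yields the periodic orbit, so $X\simeq\mathbb S^1$ is only corroboration. That is just as well: Proposition~\ref{reach proposition} bounds the reach only of smooth invariant submanifolds, so using it before you know $X$ is a curve is circular, in the paper too.
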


Of course, the existence of this limit cycle was known. We turn now our attention to another planar system.
\subsection{Another stable cycle on the plane}

Consider the system:
\begin{eqnarray}
\begin{cases}
\label{2nd plane system}
\dot{x}=1-x^2-y^2-2y\\
\dot{y}=1-x^2-y^2+2x
\end{cases},
\end{eqnarray}
which possesses:
\begin{itemize}
\item {two fixed points: one is located at $(\frac{1-\sqrt{3}}{2},\frac{-1+\sqrt{3}}{2})$ and it is an unstable spiral, while the other one is a saddle point, located at $(\frac{1+\sqrt{3}}{2},\frac{-1-\sqrt{3}}{2})$.}
\item {a stable limit cycle, surrounding the fixed point in the second quadrant of the plane, as numerical simulations imply. To the best of our knowledge, a proof of the existence of this cycle does not exist.}
\end{itemize}
We present, in figure (\ref{fig5}), the phase portrait of this system. 
\begin{figure}[h]
\begin{center}
\includegraphics[scale=0.5]{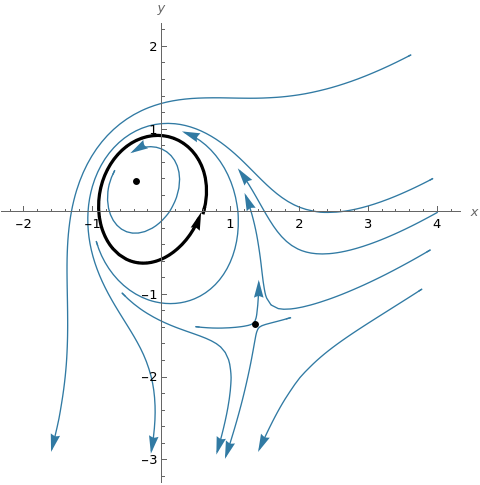}
\caption{Phase portrait for system (\ref{2nd plane system}): the two fixed points and a numerically obtained limit cycle are crearly seen.}
\label{fig5}
\end{center}
\end{figure}

To study this system, we proceed as follows.
\begin{enumerate}
\item {We numerically integrate system (\ref{2nd plane system}), with initial conditions $(x_0,y_0)=(0,0.2)$, while $t\in [0,1000]$. We depict this solution, for $t\in [800,1000]$, in figure (\ref{fig6}).
\begin{figure}[h]
\begin{center}
\includegraphics[scale=0.3]{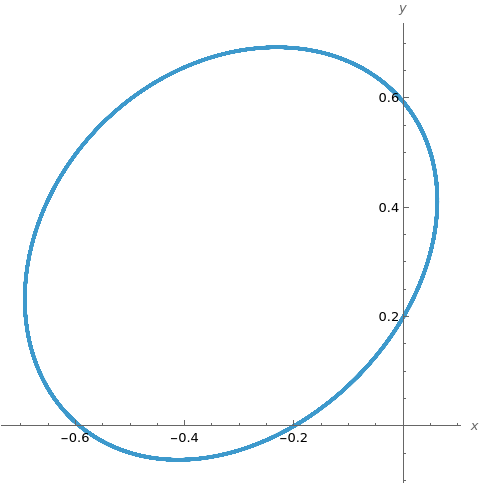}
\hspace*{2.cm}
\includegraphics[scale=0.3]{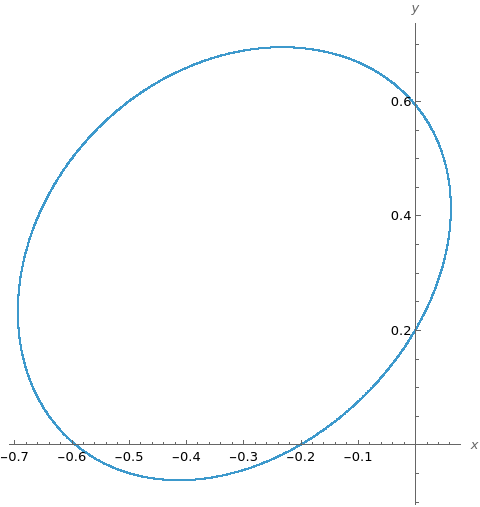}
\caption{Left panel: numerically obtained solution of system (\ref{2nd plane system}), with initial conditions $(0,0.2)$Here, $t\in [800,1000]$. Right panel: Sample points from this numerically obtained orbit. Here $t\in[100,1000]$ with a time-step of $0.1$.}
\label{fig6}
\end{center}
\end{figure}
}
\item {We sample points from this numerical solution: we record every point of it, from $t=100$ to $t=1000$, with a time-step of $0.1$. This set $S$ is also depicted in figure (\ref{fig6}).
}
\item {The minimum distance between the points of $S$ was found to be $0.0066..$, while the maximum distance was $0.841...$. Thus we compute the persistent diagram for this set, with $\epsilon\in [0,1]$. The result is shown in figure (\ref{fig8}).
\begin{figure}[h]
\begin{center}
\includegraphics[scale=0.5]{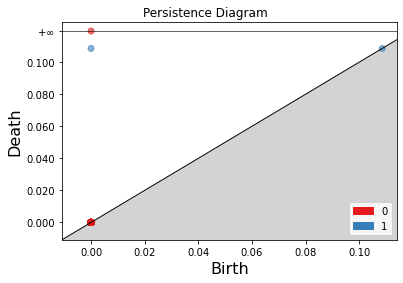}
\caption{The persistence diagram for the sample points of the previous figure.}
\label{fig8}
\end{center}
\end{figure}
}
\end{enumerate}

In this persistence diagram, we observe the following.
\begin{enumerate}
    \item The interval of persistence of the $0$-dim bar is $(0,\infty)$.
    \item The interval for the $1$-dim bar is $(a,b)$, where $a\leq 0.004, b \geq 0.105$.
    \item All the other bars die before $0.004$, except for the $1$-dim bar being born after $0.105$.
\end{enumerate}
Given that the involved Cech complexes are homotopy equivalent to finite planar simplicial complexes, we conclude that the inclusions of the Cech complexes are homotopy equivalences of $\mathbb{S}^1$ on the interval $[0.004,0.105]$.

\begin{proposition}
\label{Prop2Db}
Let $X$ be the $\omega$-limit set of the orbit of system (\ref{2nd plane system}) with initial condition $(0,0.2)$ and S the sample point set described above. Assume that $\rch(X)>0$ and $d_H(X,S)\leq \e$.  
\begin{enumerate}
    \item The bottleneck distance between the geometric descriptors $PD(X)$ and $PD(S)$ is at most $\e$.
    
    \item Let $t<\infty$ be the largest finite death time of a 0-dim bar of $PD(S)$. Then $X$ cannot be represented as the disjoint union of two subspaces which are more than $2t + 2\e$ apart. 
    
    \item Let $t<\infty$ be the largest finite death time of a 0-dim bar of $PD(S)$. If $X$ is path-connected, then $d_H(S,X)\geq t$.
    
    \item If we further assume that $\rch(X)\geq \frac{0.004}{2-\sqrt{2}}\approx 0.00682843$ and             \[
            \e \leq \min\left\{(3-2\sqrt{2})\rch(X),(3-2\sqrt{2}) \frac{0.105}{2-\sqrt{2}} \right\}\approx \min\left\{0.17157\rch(X),0.0307538 \right\},
            \]
then $X\simeq \mathbb{S}^1$, i.e., the limit set is a circle up to homotopy.

    \item Let $t=0.841$ be the largest distance between the points of $S$. If $\e< 0.840/2$, then the limit set can't be a point.

    \item If $\e < \frac{0.105-.004}{2}=0.0505$, then $X$ can't be star-like (and in particular, it can't be convex).
    
\end{enumerate}
\end{proposition}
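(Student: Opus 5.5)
The plan is to copy the proof of Proposition \ref{Prop2Da}. Every item except (4) is a direct application of Proposition \ref{CorStab}, taking $S$ to be the sample set and $S'=X$. Both sets are compact: $S$ is finite, and $X$ is a closed subset of the compact closure of the bounded orbit. Items (1), (2) and (3) are exactly items (1), (2) and (3) of Proposition \ref{CorStab}, using the hypothesis $d_H(X,S)\leq \e$. For (5), $S$ is discrete and its diameter is $0.841$. Proposition \ref{CorStab}(4) then rules out $X$ being a point whenever $\e<0.841/2$, and $\e<0.840/2$ is stronger. For (6), I read off from the diagram in figure \ref{fig8} that the $1$-dimensional bar $(a,b)$ has lifespan at least $0.105-0.004=0.101$. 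Proposition \ref{CorStab}(5), which rests on Lemma \ref{LemmaStar}, then shows $X$ cannot be star-shaped when $\e<0.101/2=0.0505$.

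The substantive item is (4). I apply Corollary \ref{CorRecon1} with $[a,b]=[0.004,0.105]$. The hypothesis of that corollary is that no bar of the Cech PH of $S$ is born or dies in $[a,b]$. This is what the diagram shows: all bars other than the essential $0$-bar and the $1$-bar $(a,b)$ die before $0.004$, the extra $1$-bar is born after $0.105$, and the main $1$-bar is born at most $0.004$ and dies at least $0.105$.

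Next I check the two numerical conditions. Condition (1) is $\rch(X)\geq 0.004/(2-\sqrt2)$, which is assumed. Condition (2) is $d_H(S,X)\leq(3-2\sqrt2)\rho$ with $\rho=\min\{\rch(X),0.105/(2-\sqrt2)\}$. Since $(3-2\sqrt2)\rho=\min\{(3-2\sqrt2)\rch(X),(3-2\sqrt2)\cdot 0.105/(2-\sqrt2)\}$, this is exactly the assumed bound on $\e$, and $\e$ dominates $d_H(X,S)$.

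The corollary then gives $\Cech(S,r_0)\simeq X$ at the scale $r_0=\rho(2-\sqrt2)\in[0.004,0.105]$. By the nerve theorem, $\Cech(S,r_0)$ is homotopy equivalent to the union of $r_0$-balls in $\RR^2$. On $[0.004,0.105]$ the persistence shows $\beta_0=1$, $\beta_1=1$, and $\beta_k=0$ for $k\geq 2$. This union is a compact planar region that is a finite union of discs. Such a region is homotopy equivalent to a wedge of circles, one for each bounded complementary component. A connected one with first Betti number $1$ is therefore homotopy equivalent to $\mathbb{S}^1$, so $X\simeq\mathbb{S}^1$.

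The main obstacle is this last step, passing from the homology read off the diagram to an actual homotopy type. It requires the fact that connected planar unions of finitely many discs (or finite planar complexes) are aspherical with free fundamental group, so that Betti numbers determine the homotopy type. I would cite this standard fact instead of proving it. A secondary concern is that reading the bars off the numerically computed diagram must be done reliably, but the statement takes those readings as given.
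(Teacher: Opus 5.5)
Your proposal is correct and is essentially the paper's own proof, which defers to that of Proposition~\ref{Prop2Da}: items (1)--(3), (5) and (6) are read off from Proposition~\ref{CorStab}, and item (4) follows from Corollary~\ref{CorRecon1} on $[0.004,0.105]$ together with the fact that a connected planar complex with first Betti number $1$ is homotopy equivalent to $\mathbb{S}^1$. One small slip: the filtration uses open balls, so the union of $r_0$-balls is a finite union of open discs rather than a compact region, but the wedge-of-circles fact for connected planar sets of this kind still applies.
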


The proof is analogous to that of Proposition \ref{Prop2Da}.

Once again, we focus our attention to the fourth point of the above proposition. The set $S$ is contained in the region
\[
\left\{(x,y)\in \mathbb{R}^2, \ 0.16<\left(x-\frac{1-\sqrt{3}}{2}\right)^2+\left(y-\frac{-1+\sqrt{3}}{2}\right)^2<1.4
\right\},
\]
and one can show, using proposition \ref{reach proposition}, that every invariant set of system (\ref{2nd plane system}) contained in this region has reach bounded from below from the number $0.05$, thus the first assumption holds. Also, the limit set cannot be an annulus. We therefore have the following:
\begin{corollary}
Let $X$ be the $\omega$-limit set of the orbit of system (\ref{2nd plane system}) with initial condition $(0,0.2)$ and S the sample point set described above. Assume that $d_H(X,S)\leq 0.009$. Then $X$ is hommeomorphic to a cycle. In particulat, system (\ref{2nd plane system}) possesses a periodic orbit.
\end{corollary}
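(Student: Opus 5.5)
The plan is to combine part (4) of Proposition \ref{Prop2Db} with the reach bound from Proposition \ref{reach proposition}, and then finish with Poincaré--Bendixson. The reach assumption needed in part (4) must be verified, not assumed.

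First I bound the reach. Since $d_H(X,S)\le 0.009$ and $S$ lies in the open annulus around the spiral point, $X$ lies in the $0.009$-thickening of that annulus. This thickening still avoids both fixed points: the spiral is the centre, and the saddle lies at distance $\sqrt{(\sqrt3)^2+(\sqrt3)^2}=\sqrt6>\sqrt{1.4}+0.009$ from it. On this slightly enlarged compact region I estimate $b=\min\|F\|$ and $L=\max\|DF\|$. Here $DF=\begin{pmatrix}-2x & -2y-2\\ -2y+2 & -2x\end{pmatrix}$, so $L$ is a simple explicit maximum. The paper asserts that Proposition \ref{reach proposition} then gives $\rch(X)\ge 0.05$ for invariant sets in this region.

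There is a subtlety: Proposition \ref{reach proposition} is stated for compact smooth invariant submanifolds. So I would argue in the following order.

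1. $X$ is a nonempty compact invariant set, because the orbit is bounded (it stays near $S$).
2. $X$ contains no fixed point, because both fixed points are outside the region.
3. By Poincaré--Bendixson, $X$ is a periodic orbit. This is a smooth compact $1$-manifold with $\|F\|\ge b$ on it.
4. Proposition \ref{reach proposition} therefore applies and gives $\rch(X)\ge b/(2L)\ge 0.05$.

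Strictly speaking, steps 1--3 already prove the corollary. The PH argument adds a quantitative check consistent with it, which is presumably the intended route.

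Following the paper's route, I next check the hypotheses of Proposition \ref{Prop2Db}(4):
\[
\rch(X)\ge 0.05\ge 0.0068,\qquad 0.009\le\min\{0.17157\cdot 0.05,\,0.0307\}\approx 0.00858 .
\]
This is the main obstacle. With the crude reach bound $0.05$, the bound $0.009$ slightly exceeds $0.17157\cdot 0.05$. So I would need to sharpen the reach estimate to at least $0.009/0.17157\approx 0.0525$, by a careful computation of $b/(2L)$ on the region. Alternatively, I would use Corollary \ref{CorRecon3} with $\alpha,\alpha'$ inside $[0.004,0.105]$. Its conditions $d_H<\min\{\alpha,(\alpha'-\alpha)/2\}$ and $\rch(X)>\max\{4d_H,\alpha'+d_H\}$ can be met by taking, for example, $\alpha=0.01$ and $\alpha'=0.03$, which only need $\rch(X)>0.039$. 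This yields $H_*(X)\cong H_*(\mathbb S^1)$ from the single persistent $1$-bar.

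Finally, I conclude. $X$ is an $\omega$-limit set of a planar $C^1$ flow, it is compact, it contains no equilibria, and it has the homology of a circle. By Poincaré--Bendixson it is a closed orbit, hence homeomorphic to a circle, and the system possesses a periodic orbit. Because $X$ has nontrivial $H_1$ and lies in the annulus around the spiral point, this orbit encircles that fixed point.
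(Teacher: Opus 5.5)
Your proof is correct, but its decisive step differs from the paper's.

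\textbf{The paper's route.} It goes through Proposition~\ref{Prop2Db}(4). It asserts, via Proposition~\ref{reach proposition}, that invariant sets in the annulus have reach at least $0.05$. From the persistent $1$-bar on $[0.004,0.105]$ it concludes $X\simeq\mathbb S^1$. It then upgrades this to a closed orbit with the informal remark that a planar limit set cannot be an annulus.

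\textbf{Your route.} You observe that the hypothesis $d_H(X,S)\le 0.009$ already confines the compact set $X$ to a closed thickening of the annulus that contains neither equilibrium. Poincar\'e--Bendixson alone then makes $X$ a periodic orbit, so the persistence diagram and the reach bound are logically unnecessary. Both routes rest equally on the paper's claim that $S$ lies in that annulus.

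\textbf{What your route gains.} It gives rigor at two points where the paper is shaky.
\begin{itemize}
\item Proposition~\ref{reach proposition} only applies to compact smooth invariant submanifolds. Applying it to $X$ therefore presupposes essentially what is to be shown. You correctly invoke it only after $X$ is known to be a periodic orbit.
\item As you noticed, the paper's constants do not close: $(3-2\sqrt2)\cdot 0.05\approx 0.00858<0.009$. Your repair through Corollary~\ref{CorRecon3} with $\alpha=0.01$, $\alpha'=0.03$ is valid. It needs only $d_H(X,S)<0.01$ and $\rch(X)>0.039$.
\end{itemize}

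\textbf{What the paper's route gains.} It illustrates the persistent-homology machinery, which is the only tool available in the higher-dimensional examples where Poincar\'e--Bendixson is unavailable.

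\textbf{One small imprecision.} The hypothesis bounds $X$, not the orbit, so ``the orbit stays near $S$'' is not the right justification. Instead use the version of Poincar\'e--Bendixson for nonempty compact limit sets without equilibria. Alternatively, note that a nonempty compact $\omega$-limit set forces the forward orbit to be bounded. Otherwise the orbit would cross a large sphere at arbitrarily late times, producing a limit point far from $X$.
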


We now turn our attention to limit sets more complicated than periodic orbits.
\subsection{An invariant torus in $\RR^3$}
In \cite{Jafari-Sprott}, the following system was presented:
\begin{eqnarray}
\label{1st torus}
\begin{cases}
\dot{x}= y\\
\dot{y}=  -x - yz\\
\dot{z}= y^2 - a + bz
\end{cases},
\end{eqnarray}
where $a=7$ and $b=0.55$. This system has a number of interesting properties, among them being the apparent existence of an attracting torus, which we depict in figure \ref{fig9}.
\begin{figure}[h]
\begin{center}
\includegraphics[width=10cm,height=6cm]{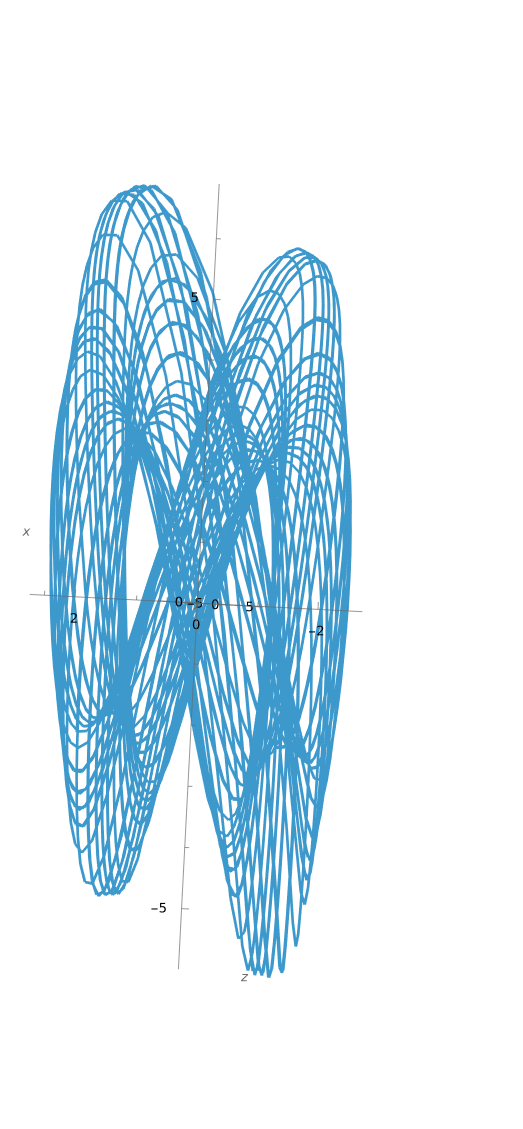}
\caption{The attracting torus of system (\ref{1st torus}). Here, the initial condition is $(0.5,0.5,0)$ and the system was numerically integrated for $t\in [0,10000]$. We present only the $t\in [9800,10000]$ part of the obtained orbit. The coordinate axes are arranged so as the shape of the torus is clearly seen.}
\label{fig9}
\end{center}
\end{figure}

\begin{figure}[h]
\begin{center}
\includegraphics[width=8cm,height=5cm]{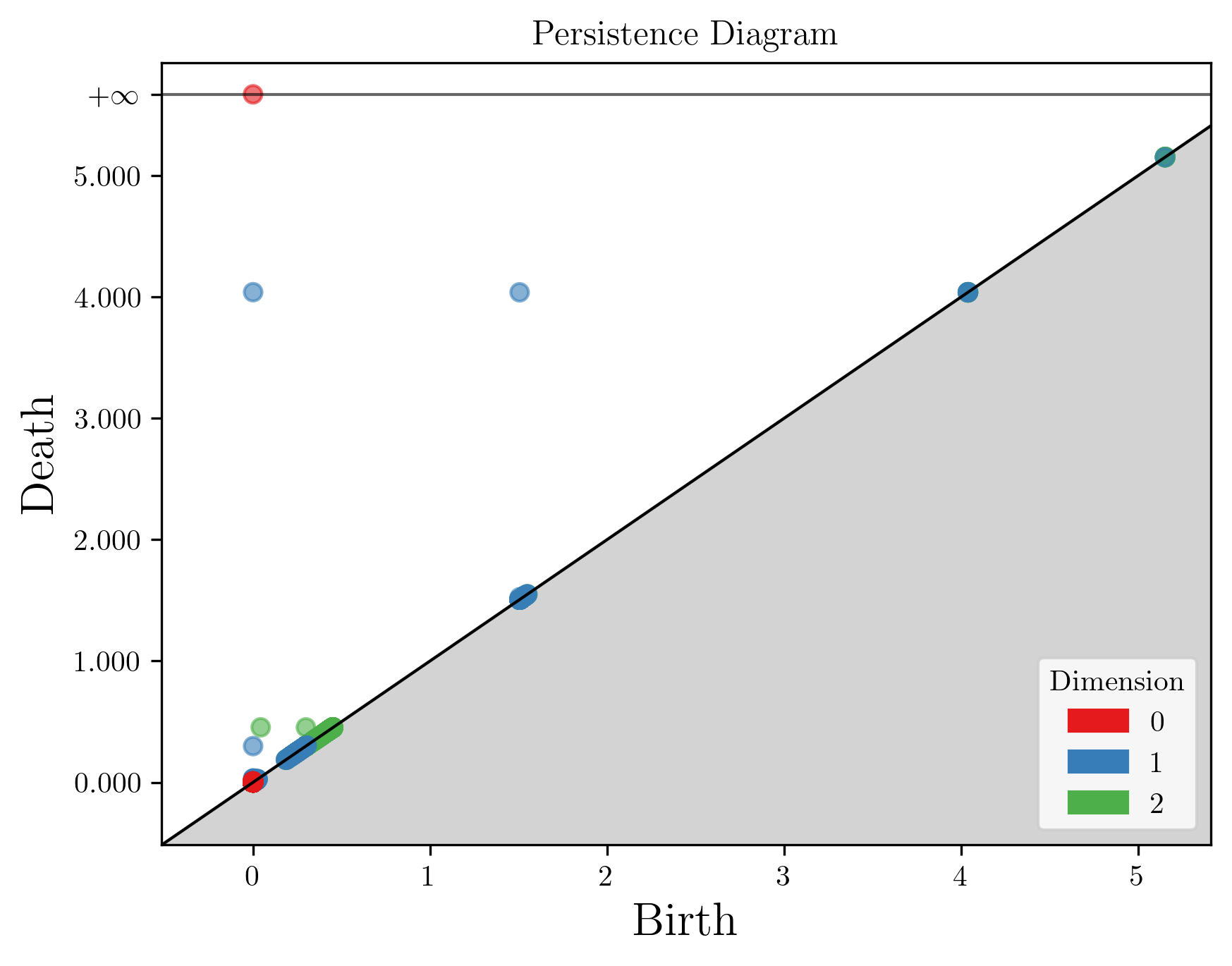}
\caption{Persistence diagram for the sample point set.}
\label{fig11}
\end{center}
\end{figure}

This torus is the $\omega$-limit set of various orbits of system (\ref{1st torus}). To produce a point-set which will densely cover this torus, we pick three initial conditions, namely $(0.5,0,0.5),\ (0.5,0,0),\ (1,6,0)$ and record their orbits in the time interval $t\in [5000,10000]$, with a time-step of $0.1$. As before, we name $S$ this sample set, while $X$ stands for the actual $\omega$-limit set of these orbits, which we assume that has positive reach, while $d_H(X,S)<\e$. The persistence diagram of $S$ is shown in figure \ref{fig11}.

We observe that, for $r\in [a,b]$, where $a=0.043295$ (=the birth of the first 2-dim class, as the last death of the first cluster along the diagonal is smaller) and $b=0.185908$ (=the first birth on the second cluster on the diagonal) the inclusions of $\Cech(S,r)$ induce the homology isomorphism, with the homology being that of a torus. 

\begin{proposition}
\label{Prop3Da}
Let us suppose that the orbits of system (\ref{1st torus}) with initial conditions $(0.5,0,0.5),\ (0.5,0,0),\ (1,6,0)$ have a common $\omega$-limit set $X$ and S the sample point set described above. Assume that $\rch(X)>0$ and $d_H(X,S)\leq \e$.  
\begin{enumerate}
    \item The bottleneck distance between the geometric descriptors $PD(X)$ and $PD(S)$ is at most $\e$.
    
    \item Let $t<\infty$ be the largest finite death time of a 0-dim bar of $PD(S)$. Then $X$ cannot be represented as the disjoint union of two subspaces which are more than $2t + 2\e$ apart. 
    
    \item Let $t<\infty$ be the largest finite death time of a 0-dim bar of $PD(S)$. If $X$ is path-connected, then $d_H(S,X)\geq t$.

    \item Let $t=14.9$ be the largest distance between the points of $S$. If $\e< 14.9/2$, then the limit set can't be a point.

    \item If $\e < \frac{4.0402920-0.0005142}{2}=2.01989$, then $X$ can't be star-like (and in particular, it can't be convex).
\end{enumerate}
\end{proposition}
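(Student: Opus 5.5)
The plan is to obtain every item by specializing Proposition \ref{CorStab} with $S' = X$. The hypotheses needed there are that $S$ and $X$ are compact subsets of $\RR^3$ and that $d_H(X,S)\le \e$. The sample $S$ is finite, so it is compact. The set $X$ is an $\omega$-limit set, hence closed. We are working under the standing assumption that the three orbits are relatively compact, so $X$ is also bounded and therefore compact. With these in place, item (1) is exactly item (1) of Proposition \ref{CorStab}, i.e. the Stability Theorem \ref{ThmStab} applied to $S$ and $X$. Items (2) and (3) are items (2) and (3) of Proposition \ref{CorStab} read verbatim, with $t$ the largest finite death time of a $0$-dimensional bar of $PD(S)$.

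For item (4) I would use item (4) of Proposition \ref{CorStab}, which applies because $S$ is discrete and $14.9$ is its diameter. If $X=\{x\}$ and $\e<14.9/2$, then all of $S$ lies in the open ball $B(x,14.9/2)$. Any two points of $S$ would then be at distance less than $14.9$, contradicting the choice of $t$.

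For item (5) the key step is to read off from Figure \ref{fig11} a finite bar $[0.0005142,\,4.0402920)$ of $PD(S)$, of lifespan $t=4.0402920-0.0005142$. Its distance to the diagonal in the bottleneck sense is $t/2$. By Lemma \ref{LemmaStar}, a star-shaped $X\subset\RR^3$ has persistence diagram consisting only of the bar $[0,\infty)$ in dimension zero. Any bijection between $PD(S)$ and $PD(X)$ must therefore send this finite bar to $\Delta$, at cost $t/2$. Theorem \ref{ThmStab} then gives $d_H(S,X)\ge t/2$, which is incompatible with $\e<t/2$.

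I expect no step to be a real obstacle; the only care needed is in items (4) and (5). There one must record the numerical data correctly and check that the bar chosen in (5) is finite, so that its distance to the diagonal is finite and equal to half its length. The assumption $\rch(X)>0$ is not used in any of the five items. Unlike the planar cases, no reconstruction statement of the type in Proposition \ref{Prop2Da}(4) is claimed here, so Corollary \ref{CorRecon1} and Proposition \ref{reach proposition} are not needed.
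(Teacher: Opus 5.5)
Your proposal is correct and matches the paper's proof. The paper also specializes Proposition \ref{CorStab} to $S'=X$, and for item (5) it uses the longest bar $(0.0005142,\,4.0402920)$ together with Lemma \ref{LemmaStar} and the Stability Theorem. Your explicit check that $X$ is compact is a detail the paper leaves implicit. In fact boundedness already follows from $d_H(X,S)\le\e$ with $S$ finite, and closedness holds because $X$ is an $\omega$-limit set.
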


\begin{proof}
    The statements are a direct consequence of Corollary \ref{CorStab}, with (5) using the fact that 
    \[(0.0005142, 4.0402920)\] 
    is the homology bar of the longest lifespan.
\end{proof}

\begin{remark}
    We can draw further geometric observations based on the fact that Cech complexes $\Cech(S,r)$ is homotopy equivalent to the $r$-neighborhood $S^r$ by the nerve theorem, and that $X^{r-\e}\subseteq S^r \subseteq X^{r+\e}$. $X^{0.0005142 + \e}$ contains a loop which only gets filled once we thicken $X$ by more than $4.0402920-\e$. At around $X^{1.5062854+ \e}$ another loop emerges which gets filled once we thicken $X$ by more than $4.0391847-\e$. These two loops getting filled at the same time might be the result of the reversing symmetry $(x,y,z)\mapsto (-x,-y,z)$ of the system.
\end{remark}

Let us now study the topological type of $X$.

\begin{proposition}
\label{Prop3Db}
Let us suppose that the orbits of system (\ref{1st torus}) with initial conditions $(0.5,0,0.5),\ (0.5,0,0),\ (1,6,0)$ have a common $\omega$-limit set $X$ and S the sample point set described above. Assume that $\rch(X)>0$ and $d_H(X,S)\leq \e$ satisfy the following:
\begin{itemize}
        \item Geometry assumption: assume $\rch(X)\geq \frac{0.043295}{2-\sqrt{2}}\approx 0.0739092$. 
        \item Sampling density assumption: Assume 
            \[
            \e \leq \min\left\{(3-2\sqrt{2})\rch(X),(3-2\sqrt{2}) \frac{0.185908}{2-\sqrt{2}} \right\}\approx \min\left\{0.17157\rch(X),0.0544512 \right\}.
            \]
    \end{itemize}
Then we have $X\simeq \Cech\Big(S,\min\left\{ \rch(X)0.585786, 0.185908) \right\}\Big)$, and thus $X$ has the homology groups of a torus.

Furthermore,  \[
    \rch(X)+d_H(S,X) - \sqrt{2(\rch(X) -d_H(S,X))^2 - (\rch(X) + d_H(S,X))^2} > 2\cdot 0.043294,
    \]
    \[
    \rch(X)+d_H(S,X) + \sqrt{2(\rch(X) -d_H(S,X))^2 + (\rch(X) + d_H(S,X))^2} < 2 \cdot 0.185909.
    \]
\end{proposition}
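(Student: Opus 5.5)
The plan is to apply Corollary \ref{CorRecon1} directly, with $[a,b]=[0.043295,\,0.185908]$, the interval read off from the persistence diagram in Figure \ref{fig11}. The first step is to check the hypothesis that no bar of the Cech persistent homology of $S$ is born or dies in $[a,b]$. By the choice of endpoints, $a$ is the birth of the first $2$-dimensional class, and every death in the first diagonal cluster occurs strictly earlier. Likewise, $b$ is the first birth in the second cluster, and by the stated observation no other birth or death falls in between. So the bar structure is constant on $[a,b]$. The ranks there are $1,2,1$ in dimensions $0,1,2$, which is the homology of a torus.

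Next I check conditions (1) and (2) of Corollary \ref{CorRecon1}. Condition (1), $\rch(X)\geq a/(2-\sqrt2)$, is exactly the geometry assumption. For condition (2), set $\rho=\min\{\rch(X),\,b/(2-\sqrt2)\}$. Since $3-2\sqrt2>0$, we have
\[
(3-2\sqrt2)\rho=\min\{(3-2\sqrt2)\rch(X),\,(3-2\sqrt2)\,b/(2-\sqrt2)\}.
\]
The sampling density assumption then gives $d_H(S,X)\leq\e\leq(3-2\sqrt2)\rho$.

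The Corollary now yields $\rho(2-\sqrt2)\in[a,b]$ and $\Cech(S,\rho(2-\sqrt2))\simeq X$. Computing, $\rho(2-\sqrt2)=\min\{0.585786\,\rch(X),\,0.185908\}$, which is the scale in the statement. Since the homology at any scale in $[a,b]$ is that of a torus, $X$ has the homology groups of a torus.

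For the final two inequalities I use the "additionally" clause of Corollary \ref{CorRecon1}. I take $a'=0.043294<a$ and $b'=0.185909>b$. A birth or death occurs in $[a',a]$, namely the birth of the $2$-class at $a$, and one occurs in $[b,b']$, namely the first birth at $b$. This gives the two displayed inequalities verbatim.

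The main obstacle is not algebraic. It is the honest justification that $[a,b]$ is free of births and deaths, which depends on reading Figure \ref{fig11} correctly. A secondary subtlety is that the additional clause formally places the endpoints of the reconstruction interval of Theorem \ref{Recon1} in the open interval $(a',b')$, so strictness depends on events occurring exactly at $a$ and $b$. That is why $a'$ and $b'$ are perturbed slightly outward, and it is the point I would double-check.
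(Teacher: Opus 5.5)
Your proposal is the paper's proof written out in more detail: apply Corollary~\ref{CorRecon1} on $[a,b]=[0.043295,0.185908]$, note that its conditions (1)--(2) are exactly the geometry and sampling assumptions, read off the scale $\rho(2-\sqrt{2})=\min\{0.585786\,\rch(X),\,0.185908\}$, and use the additional clause with $a'=0.043294$, $b'=0.185909$. Two caveats, both inherited from the paper rather than introduced by you: first, calling $a$ and $b$ birth times clashes with the hypothesis of no births or deaths on $[a,b]$ unless the printed values are read as rounded inward, so that the homology is genuinely constant on the closed interval (this matters exactly when the reconstruction scale equals $a$ or $b$); second, the $+(\rch(X)+d_H(S,X))^2$ under the second square root is a typo carried over from Corollary~\ref{CorRecon1}, because bounding the upper end of the reconstruction interval in Theorem~\ref{Recon1} yields that inequality only with a minus sign there, so it is not obtained ``verbatim.''
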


\begin{proof}
    The result is a direct application of Corollary \ref{CorRecon1}. For the first part (homotopy equivalence) we use the fact that on the scale interval $[a,b]=[0.043295,0.185908]$ the persistent homology of $S$ is constantly that of a torus. 

    For the second conclusion we use the fact that the mentioned homology of the torus on $[a,b]$ changes below $a=0.043295$ and above $b=0.185908$. So let's set $a'=0.043294$ and above $b'=0.185909$. Then the last conclusion of Corollary \ref{CorRecon1} implies that the pair $\rch(X),d_H(X,S))$ lies in the stated region, drawn on the right side of Figure \ref{ReachVSdH3D}.
\end{proof}

\begin{figure}[h]
\begin{center}
\includegraphics[scale=0.3]{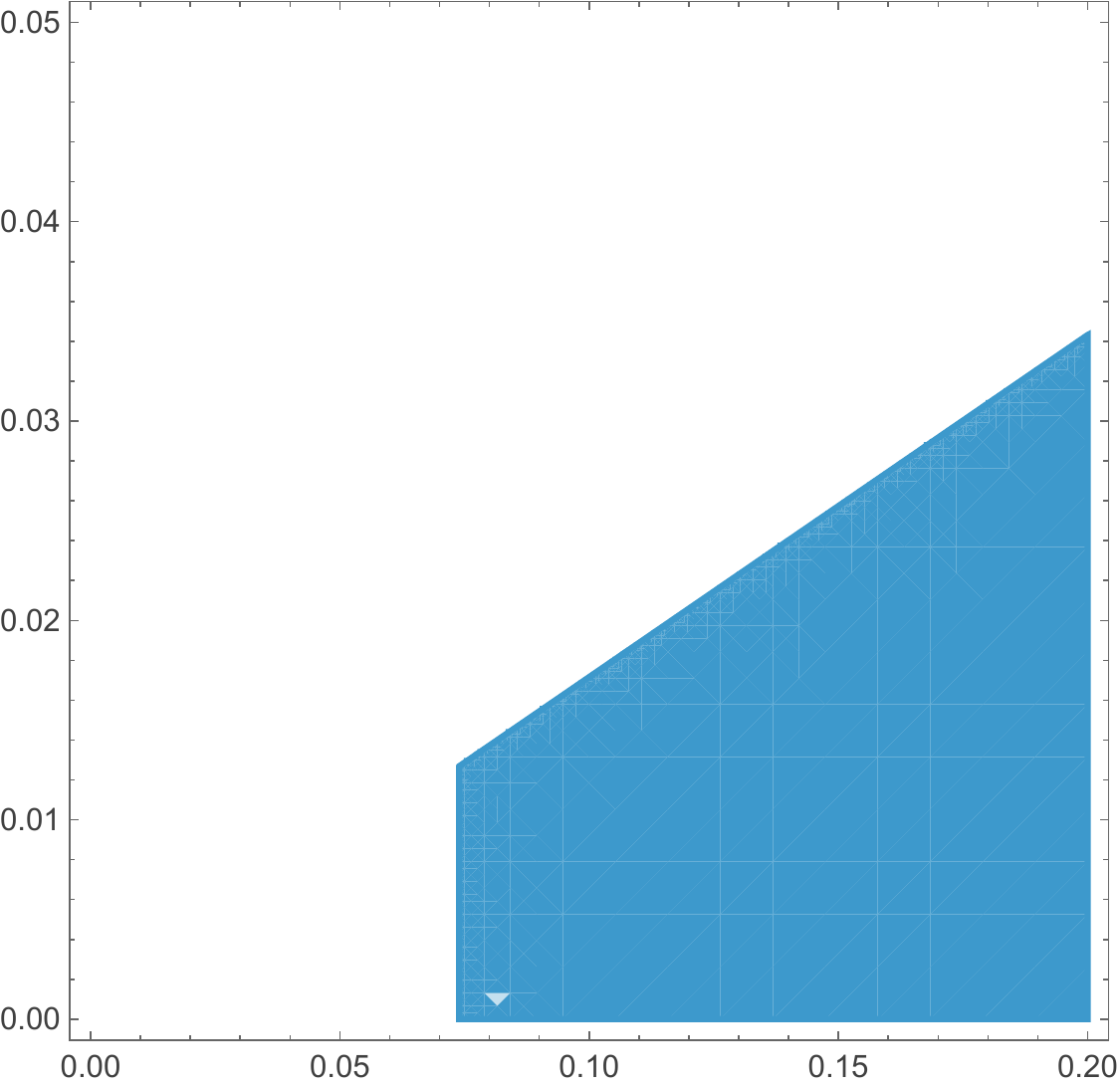}
\includegraphics[scale=0.3]{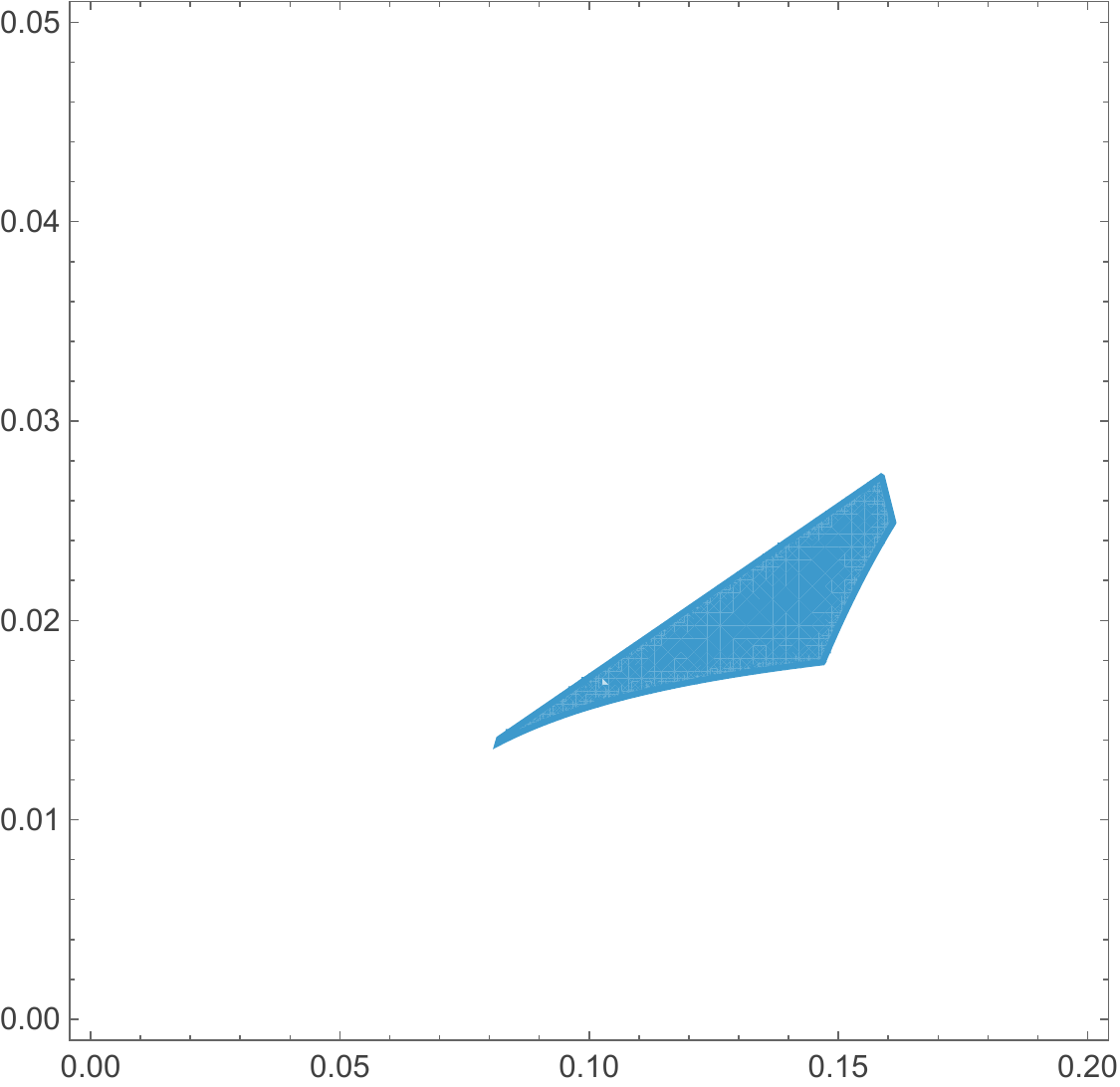}
\caption{Left: the geometric assumption on $\rch(X)$ (along the $x$-axis) and the sampling density assumption on $d_H(S,X)$ (along the $y$-axis) from Proposition \ref{Prop3Db} imply the blue region as the potential locations of $(\rch(X),d_H(X,S))$. The region is \textit{infinite} with its upper border eventually turning horizontal at height $0.0544512$. Right: the blue \textit{bounded} region as the potential locations of $(\rch(X),d_H(X,S))$ as concluded from our geometric assumption on $\rch(X)$ and the sampling density assumption, along with the computational results. Hence our PH computation imposes fairly significant restrictions on $\rch(X)$ and $d_H(S,X)$ i.e. much stronger restrictions than the mentioned assumptions themselves.  }
\label{ReachVSdH3D}
\end{center}
\end{figure}

Using proposition \ref{reach proposition}, one can show that the reach of every set contained in $\{(x,y,z)\in \mathbb{R}^3,|x|<2,|y|<5,|z|<5\}$ is bounded from below from $0.1$. Moreover, since the orbits used to define $X$ are bounded, $X$ must be compact and connected.  It has the homology groups of a torus and we can therefore state the following:
\begin{corollary}
    Let us suppose that the orbits of system (\ref{1st torus}) with initial conditions $(0.5,0,0.5),\ (0.5,0,0),$ and $(1,6,0)$ have a common $\omega$-limit set $X$. Denote by $S$ the sample point set described above. Assume that $d_H(X,S)\leq 0.017$. If $X$ is a surface , it is homeomorphic to a torus.

In particular, the system (\ref{1st torus}) possesses an invariant torus.
\end{corollary}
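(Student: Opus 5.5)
The plan is to reduce the corollary to Proposition \ref{Prop3Db}, using the reach lower bound that the text derives from Proposition \ref{reach proposition}. Then we combine the resulting homology computation with the classification of closed surfaces.

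First, we check the hypotheses of Proposition \ref{Prop3Db}. The sample $S$ lies in the box $\{|x|<2,|y|<5,|z|<5\}$, and so does $X$, since $X$ is within Hausdorff distance $0.017$ of $S$ (shrinking the box slightly if needed, or noting the margin). The paragraph preceding the corollary then gives $\rch(X)\geq 0.1$. We have to be careful here: Proposition \ref{reach proposition} requires $X$ to be a compact smooth invariant submanifold on which $F$ does not vanish. If $X$ is assumed to be a surface, it is invariant and compact as an $\omega$-limit set. We would argue smoothness, or else take it as part of the standing assumption, and note that the system has no equilibria in the region. Indeed, equilibria require $y=0$, $x=0$, and $z=a/b=7/0.55\approx 12.7$, which lies outside $|z|<5$. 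Hence $\|F\|\geq b_0>0$ there. In any case $\rch(X)\ge 0.1>0.0739092$, so the geometry assumption holds. Next we check the sampling assumption: $0.17157\cdot\rch(X)\ge 0.0172>0.017$ and $0.017<0.0544512$. Hence $d_H(X,S)\le 0.017$ satisfies the bound on $\e$. Proposition \ref{Prop3Db} then yields that $X\simeq\Cech(S,r)$ for a suitable $r\in[0.043295,0.185908]$, so $X$ has the homology of a torus: $H_0\cong\mathbb{F}$, $H_1\cong\mathbb{F}^2$, $H_2\cong\mathbb{F}$.

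Second, we identify the surface. As noted, $X$ is compact and connected, being the $\omega$-limit set of a bounded orbit. It is a closed surface embedded in $\RR^3$, hence orientable, since compact surfaces embedded in $\RR^3$ separate space and are therefore orientable; nonorientable closed surfaces do not embed in $\RR^3$. We also have $H_2(X)\ne 0$, which is consistent with this. By the classification theorem, $X$ is a sphere or a connected sum of $g$ tori, with first Betti number $2g$. Since $b_1=2$, we get $g=1$, so $X$ is homeomorphic to a torus. Since $X$ is invariant, the system possesses an invariant torus.

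The main obstacle is the first step, namely legitimately invoking Proposition \ref{reach proposition}. That proposition presupposes that $X$ is a smooth manifold, whereas the corollary only assumes "surface". The quoted reach bound is stated for "every set" in the box, which is stronger than what the proposition proves. I would address this by treating smoothness of the invariant surface as implicit in the hypothesis, as the paper does, and by verifying $\|F\|\ge b_0$ and $\|DF\|\le L$ on the box so that $b_0/(2L)\ge 0.1$. The remaining arithmetic checks are routine.
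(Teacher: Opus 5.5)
Your outline is the paper's own argument: get $\rch(X)\ge 0.1$ from Proposition \ref{reach proposition}, apply Proposition \ref{Prop3Db}, then classify the surface. Your arithmetic is right ($0.1>0.0739$ and $(3-2\sqrt{2})\cdot 0.1\approx 0.01716>0.017$). Your orientability step is a genuine improvement on the paper. Persistent homology is typically computed over $\mathbb{Z}_2$, where the Klein bottle has the same Betti numbers as the torus, so it is the embedding in $\RR^3$ that excludes it.

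The gap is in the step you yourself singled out, and your proposed fix does not close it. You were right to distrust the ``every set in the box'' formulation, but the problem is deeper. Checking $\|F\|\ge b_0$ and $\|DF\|\le L$ on the box and invoking Proposition \ref{reach proposition} cannot give $\rch(X)\ge 0.1$ for a surface, because that proposition is false when $m\ge 2$. Its proof yields $|\kappa|\,|\langle F(p),w\rangle|\le L$ for a principal direction $w$, and this says nothing when $w\perp F(p)$. The sentence claiming the denominator is maximal at $w=\pm F(p)/\|F(p)\|$ inverts the logic: $w$ is the eigenvector belonging to $\kappa$ and cannot be chosen. Concretely, $F(x,y,z)=(-y,x,0)$ has $\|DF\|=1$ and leaves invariant the torus of revolution about the $z$-axis with central radius $10$ and tube radius $0.1$. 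On it $\|F\|\ge 9.9$, so the proposition would give reach at least $4.95$. In fact the core circle lies in the medial axis and the reach is $0.1$. The cited inequality $\eta_X\ge\rho_X/2$ is not a general fact either.

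So neither your argument nor the paragraph preceding the corollary (which has the same defect) establishes the hypotheses of Proposition \ref{Prop3Db}. With $d_H(X,S)\le 0.017$ these need $\rch(X)\ge 0.017/(3-2\sqrt{2})\approx 0.0991$. This hypothesis is not cosmetic. Take the boundary of a thin neighbourhood of a torus and join its two components by a thin tube: the result is a genus-two surface Hausdorff-close to the torus. So closeness to $S$ alone does not determine the surface type. The example above also shows that no bound on $\|F\|$ and $\|DF\|$ alone can force a positive lower bound on the reach of an invariant surface.

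To get a correct statement you must either add $\rch(X)\ge 0.0991$ (say $\rch(X)\ge 0.1$) as an explicit hypothesis, or find a reach bound that uses information transverse to the flow. With that in place, the rest of your proof is complete: torus homology from Proposition \ref{Prop3Db}, compactness and connectedness of the $\omega$-limit set, orientability from the embedding, and $b_1=2g=2$.
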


We have thus proved that, provided that our sample set has approached the sought-after limit set, this set is a torus and not, for example, a long periodic orbit. 

We can again draw further geometric observations based on the fact that Cech complex $\Cech(S,r)$ is homotopy equivalent to the $r$-neighborhood $S^r$ by the nerve theorem, and that $X^{r-\e}\subseteq S^r \subseteq X^{r+\e}$. 
    \begin{enumerate}
        \item As a torus, $X$ encloses a circular cave (bounded region in $\RR^3$) enclosed by the fundamental class in $H_2$:
        \begin{enumerate}
            \item Let $T_1$ denote the smallest thickenning of $X$ reaches across this cave and makes it not circular, i.e., approximately the thickness of the torus tube. We can see that $T_1 \in [0.29874988-\e,0.29874988+\e]$, where $0.29874988$ is the death time of a homology generator in $H_1(\Cech(S, r))$. We know that the other $H_1$ generator dying after $4$ does not correspond to this as it lives longer than the fundamental class in $H_2$.
            \item Let $T_2$ denote the smallest thickenning of $X$ that fills in the fundamental class. We can see that $T_2 \in [0.45340-\e,0.45340+\e]$ from the death of $H_2$. 
            \item There is another 2-dimensional bar approximately $[T_1, T_2]$. This one might appear due to the symmetry: if the torus has two symmetric thinnest parts of radius approximately $T_1$, then filling them terminates a 1-dimensional homology class and creates a 2-dim homology class by separating the cave into two caves. At $T_2$ both caves get filled in, with the coinciding death time potentially being the result of the symmetry.
            \item The second $H_1$ generator of the torus gets filled at $T_3 $, which lies on the interval $4.040292 \pm \e$.
            \item Another significant 1-dim bar indicates that $X$ is curving significantly, which we now explain.
        \end{enumerate}
                 \item \textbf{Proximity to the standard torus:} Let $Y$ be the standard torus with the tube-radius $R_1$ and inner-loop radius $R_2$. i.e., $Y$ is parameterized as
                 \[
                 \Big((R_2 + R_1 \cos(\f))\cos(\theta),
                 (R_2 + R_1 \cos(\f))\sin(\theta),
                 R_1 \sin(\f)\Big)
                 \]
                 Then the PD of $Y$ has the following bars:
         \begin{itemize}
             \item 0-dim $[0,\infty)$.
             \item 1-dim $[0,R_1], [0,R_2]$.
             \item 2-dim $[0,R_1]$.
         \end{itemize}
         They key observation here is that the death times must coincide for one 1-dim bar and one 2-dim bar. In our example, the closest such diagram would appear if the 1-dim bar $[1.506285,4.03918]$ of  length $\ell = 4.03918-1.506285$ is transformed by an $\ell/3=0.844298$- perturbation to $[2.35058, 3.19488]$ and a new 2-bar $[2.35058, 3.19488]$ arose. Thus if $Y'\subset \RR^3$ is any subset isometric to the standard torus, then the stability of persistent homology implies that $d_H(X,Y') \geq \ell/3 - \e = 0.844298-\e$. In short, $X$ is far from any standard torus. 

         \item \textbf{Proximity to a torus with mathcing radii}: Assume $Y''$ is the standard torus of the tube radius $T_1$ and the inner-radius $T_3$. $PD(Y'')$ would contain two $1$-dimensional bars: $[0,T_1]$ and $[0,T_3]$. Thus, in the optimal matching of $PD(Y'')$ and $PD(S)$, the interval $[1.506285,4.03918]$ of the latter would have to be matched to the diagonal by the stability theorem. We conclude that $d_H(X, Y'')\geq \ell/2 - \e=1.26645-\e$.
    \end{enumerate}

\begin{proposition}
\label{Prop3Dc}
Let $X$ be the $\omega$-limit set of system (\ref{1st torus}) and S the sample point set described above. Assume that $\rch(X)>0$ and $d_H(X,S)\leq \e$ satisfy the following:
\begin{itemize}
        \item Geometry assumption: $\rch(X)> \max \{ 4 d_H(X,S), 0.2987497 + d_H(X,S)\}$. 
        \item Sampling density assumption: $d_H(X,S)<  0.0995832$
    \end{itemize}
Then $X$ has the homology of a torus.
\end{proposition}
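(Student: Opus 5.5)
The plan is to apply Corollary \ref{CorRecon3} with a suitable choice of the scales $\alpha<\alpha'$, read off from the persistence diagram of $S$ in Figure \ref{fig11}. The geometry assumption $\rch(X)> \max \{ 4 d_H(X,S), 0.2987497 + d_H(X,S)\}$ already has the form required by the corollary, with $\alpha' = 0.2987497$. This value sits just below $0.29874988$, the death time of the short $1$-dimensional generator corresponding to the tube of the torus. So the task reduces to three steps: choose $\alpha$, check the sampling condition $d_H(X,S)<\min\{\alpha,(\alpha'-\alpha)/2\}$, and identify the image $\widetilde H$ of $H_*(\Cech(S,\alpha))\to H_*(\Cech(S,\alpha'))$ with the homology of a torus.

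\textbf{Choice of $\alpha$.} We want the condition $\min\{\alpha,(\alpha'-\alpha)/2\} \geq 0.0995832$ to be exactly the sampling density assumption. The two terms balance when $\alpha = (\alpha'-\alpha)/2$, i.e.\ $\alpha=\alpha'/3 \approx 0.0995832$. Thus we set $\alpha=\alpha'/3$, and the assumption $d_H(X,S)<0.0995832$ gives $d_H(X,S)<\min\{\alpha,(\alpha'-\alpha)/2\}$. Together with the geometry assumption, all hypotheses of Corollary \ref{CorRecon3} hold, so $H_*(X)\cong \widetilde H$.

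\textbf{Identifying $\widetilde H$.} The image of $H_k(\Cech(S,\alpha))\to H_k(\Cech(S,\alpha'))$ has as basis the bars of $PD(S)$ in dimension $k$ that contain $[\alpha,\alpha']$. From the diagram we check the following, dimension by dimension:
\begin{itemize}
\item in dimension $0$, only the infinite bar survives, all others dying before $0.043295<\alpha$;
\item in dimension $1$, exactly two bars contain $[0.0995832,0.2987497]$: the bar dying at $0.29874988$ and the long bar $(0.0005142, 4.0402920)$;
\item in dimension $2$, exactly one bar contains it, namely the fundamental class, born at $0.043295$ and dying at $0.45340$;
\item in dimension $\geq 3$ there is nothing.
\end{itemize}
The other significant bars must be excluded, namely the $1$-bar born near $1.506$ and the extra $2$-bar near $[T_1,T_2]$. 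The $1$-bar is born after $\alpha'$. The extra $2$-bar is born at approximately $T_1\approx 0.2987$, and we must check that its birth is $\geq \alpha'$, or at least not $\leq\alpha$. Hence $\widetilde H$ has Betti numbers $(1,2,1)$, the homology of a torus.

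\textbf{Main obstacle.} The delicate point is precisely this bar bookkeeping near $\alpha'$. The value $0.2987497$ is chosen just below the death $0.29874988$ of the tube generator, so that generator still survives to $\alpha'$. At the same time, any $2$-dimensional class born at roughly the same moment (the symmetric cave-splitting class) must not contain $[\alpha,\alpha']$. If its birth were below $\alpha'$ but above $\alpha$, it would still be excluded, since containment of $[\alpha,\alpha']$ requires birth $\leq\alpha$. So what has to be verified numerically is that no spurious short bar from the cluster near the diagonal is born at or before $\alpha\approx0.0996$ and dies at or after $\alpha'$. This follows from the observation that all diagonal-cluster bars die before $0.043295$.
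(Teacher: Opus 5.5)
Your proposal is correct and is essentially the paper's proof. You apply Corollary \ref{CorRecon3} with $\alpha'=0.2987497$ just below the first death $0.29874988$ of the torus classes, and you raise $\alpha$ from the last relevant birth $0.043295$ to $\alpha'/3$ so that $\alpha=(\alpha'-\alpha)/2$, which turns the corollary's hypotheses into exactly the stated geometry and sampling assumptions. One small correction to your last sentence: not all near-diagonal bars die before $0.043295$, since the paper notes a second cluster born from $0.185908$ on; those bars are born after $\alpha\approx 0.0996$, however, and so are excluded from the image anyway.
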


\begin{proof}
   The interval of scales $r\in [a,b]$ in Proposition \ref{Prop3Db} was chosen so that the computed PH is constant on it, and matching the homology of a torus. On the other hand, there is a larger interval $[\alpha, \alpha']$ for which  $\im \Big(H_*(\Cech(S,\alpha))\to H_*(\Cech(S,\alpha'))\Big)$ is isomorphic to the homology of a torus, even though the interval may intersect or contain other bars of the computed barcode.

    In our case we can take 
    \[
    \alpha=\max \{0.043295, 0.000514, 0.000257\}=0.043295
    \]
    as the latest birth of the targeted homology, and 
    \[
    \alpha'< \min \{0.45340, 4.040292, 0.29874988\}=0.29874988, \quad \alpha'=0.2987497
    \]
    as as slightly less than the first death of the targeted homology. By the nature of the conditions, the conclusions of (3) can be improved if we increase $\alpha$ so that $\alpha= (\alpha'-\alpha)/2$. In this case, $\alpha=\alpha'/3=0.0995832$, $\alpha'=0.2987497$, and the statement of the proposition follows directly from Corollary \ref{CorRecon3}.
\end{proof}

The domain obtained by Proposition \ref{Prop3Dc} is sketched in Figure \ref{ReachVSdH3Da}. 
    
\begin{figure}[h]
\begin{center}
\includegraphics[scale=0.3]{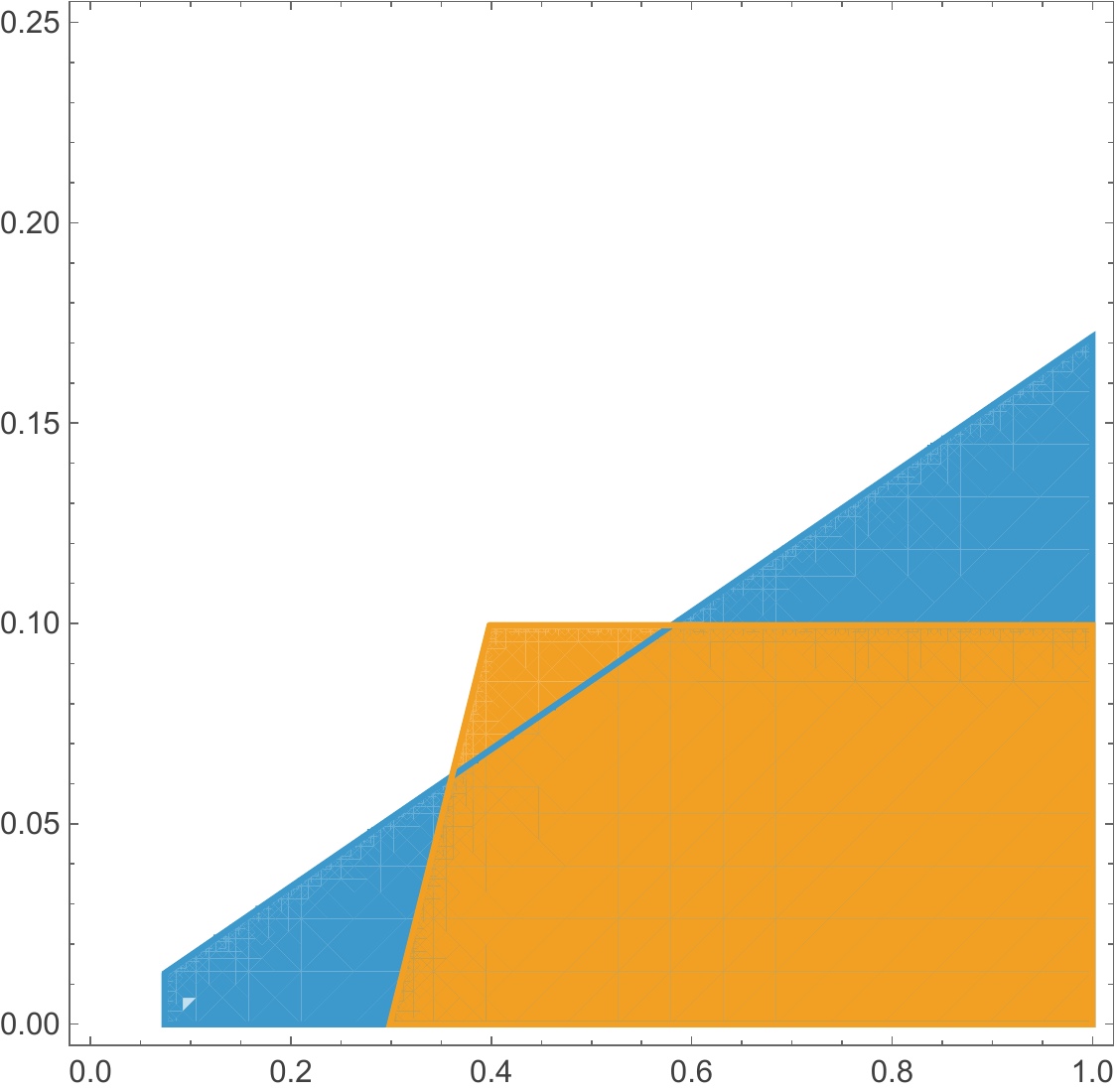}
\caption{The domain of Proposition \ref{Prop3Dc}, given by $d_H(X,S)< 0.0995832$ and $\rch(X)> \max \{ 4 d_H(X,S), 0.2987497 + d_H(X,S)\}$, is drawn in orange. It is superimposed onto the blue domain from the left side of Figure \ref{ReachVSdH3D}, which is obtained by Proposition \ref{Prop3Db}. The coordinates represent the $\rch(X)$ (along the $x$-axis) and the sampling density assumption on $d_H(S,X)$ (along the $y$-axis). We observe that Proposition \ref{Prop3Dc}, when combined with Proposition \ref{Prop3Db}, slightly extends the region of admissible configurations of $(d_H(S,X), \rch(X))$, for which the limit set attains the homology of torus. }
\label{ReachVSdH3Da}
\end{center}
\end{figure}

\subsection{Limit set for a vector field on $\mathbb{R}^4$}
In \cite{Llibre-Texeira}, the following 4d system was studied:
\begin{eqnarray}
\label{4d torus}
    \begin{cases}
    \dot{x}=-y-\epsilon(z-1)(y+2w+\frac{4x}{x^2+y^2})\\
    \dot{y}=x+\epsilon y(w+1-\frac{2}{x^2+y^2})\\
    \dot{z}=-2w\\
    \dot{w}=2z+\epsilon(y+1)w(\frac{2}{w^2+z^2}-\frac{1}{2})
    \end{cases}.
\end{eqnarray}
Set $\epsilon=0.01$ and let $(1.5,1,2,0)$ be the initial values. Numerically integrating the system (\ref{4d torus}), the projection of the orbit in the $(x,y,z)-space$ is depicted in figure \ref{fig12}.
\begin{figure}[h]
\begin{center}
\includegraphics[width=8cm,height=5cm]{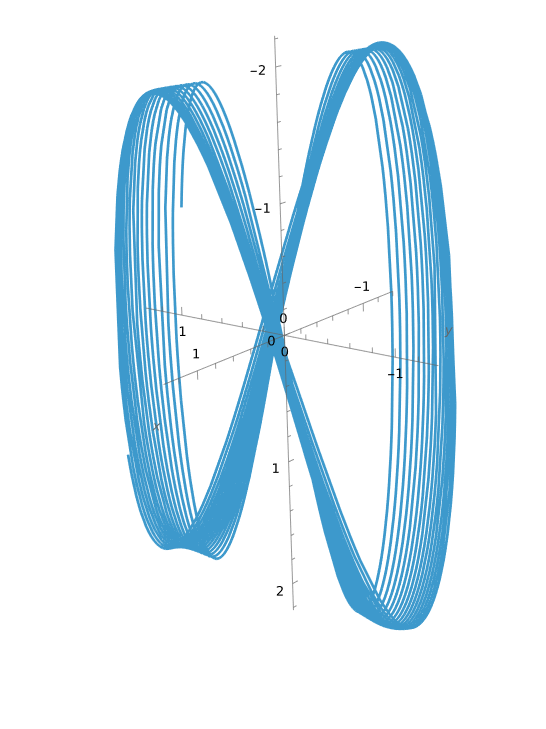}
\caption{The projection, on the $x-y-z$ space, of a numerically obtained orbit for system (\ref{4d torus}). Initial conditions: $(1.5,1,2,0)$. Here, $t\in [100,200]$.}
\label{fig12}
\end{center}
\end{figure}
Once again, to densely cover this manifold, we choose the points $(1.5,1,2,0)\ (-1,0,2,0),\ (0,1,2,0),\ (0,2,0,1)$ and numerically integrate the system, for these initial values. All four orbits seem to approach the set depicted in figure \ref{fig12}. We then sample the obtained orbits, as follows: the first one is sampled for $t\in [50,200]$ with a time-step of 0.01, the second one is sampled for $t\in[5,12]$ with a time-step of 0.001, the third one is sampled for $t\in[10,60]$ with a time-step of 0.001 and the last one is sampled for $t\in[50,430]$ with a time step of 0.01. Due to the dimensionality of the problem, the calculations were extremely time consuming. $40.000$ points were chosen for this point-set, to produce the persistence diagram shown in figure \ref{fig13}.
\begin{figure}[h]
\begin{center}
\includegraphics[width=8cm,height=7cm]{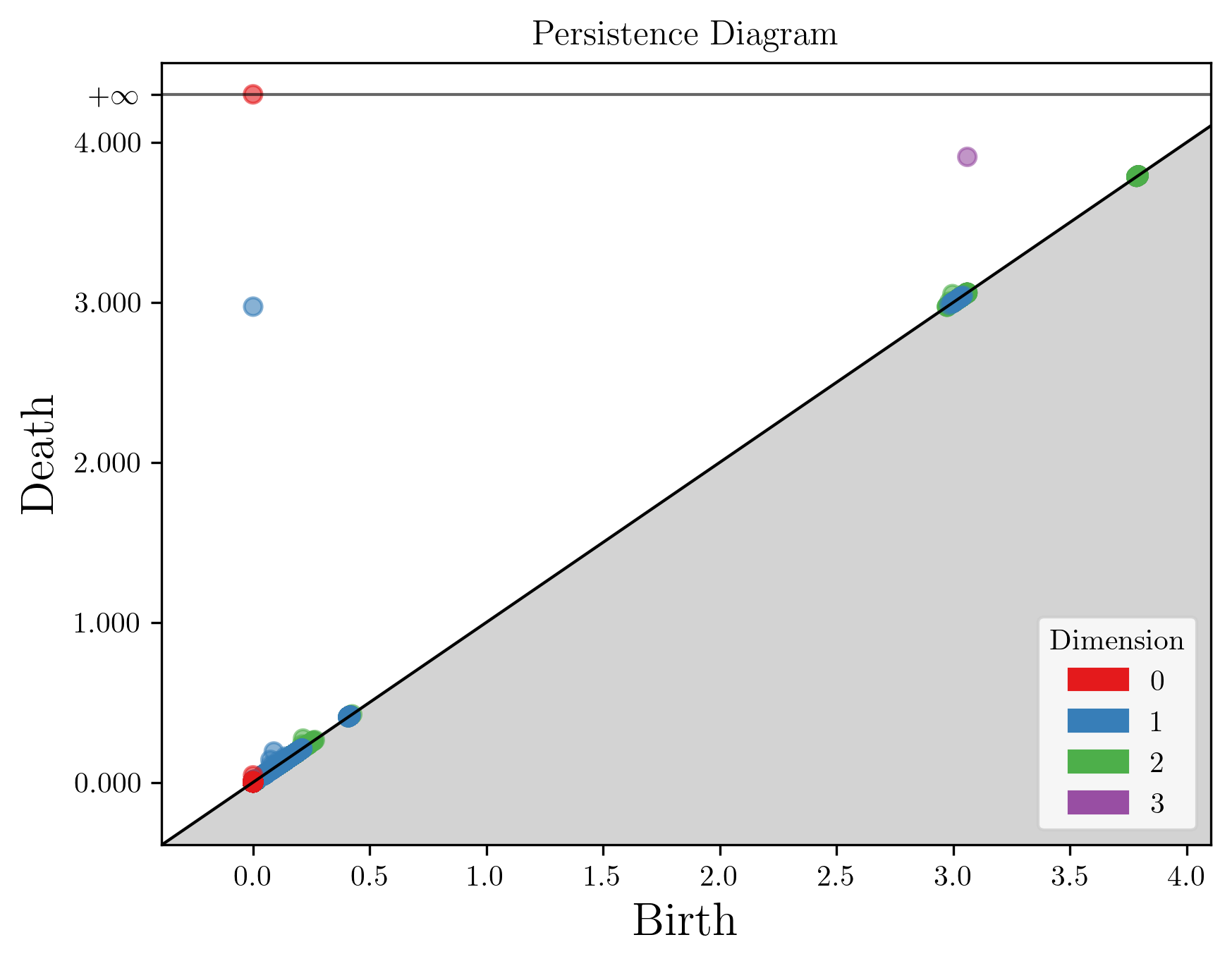}
\caption{Persistence barcode for the sample set described above.}
\label{fig13}
\end{center}
\end{figure}

As usual, let $X$ be the common limit set of the four orbits described above, which is  of positive reach, and $S$ the sample point-set of it described above. Assume $d_H(X,S)<\e$. 

We can state the first results concerning the geometry of $X$.

\begin{proposition}
\label{Prop3Da}
Let $X$ and $S$ be the $\omega$-limit set and the sample point-set described above. Assume that $\rch(X)>0$ and $d_H(X,S)\leq \e$.  
\begin{enumerate}
    \item The bottleneck distance between the geometric descriptors $PD(X)$ and $PD(S)$ is at most $\e$.
    
    \item Let $t<\infty$ be the largest finite death time of a 0-dim bar of $PD(S)$. Then $X$ cannot be represented as the disjoint union of two subspaces which are more than $2t + 2\e$ apart. 
    
    \item Let $t<\infty$ be the largest finite death time of a 0-dim bar of $PD(S)$. If $X$ is path-connected, then $d_H(S,X)\geq t$.

    \item Let $t=4.6$ be the largest distance between the points of $S$. If $\e< 4.6/2$, then the limit set can't be a point.

    \item The Hausdorff distance from $X$ to any star-like (including convex) subset of $\RR^4$ is at least  $\frac{2.97292-0.00051}{2} - \e=1.486205 - \e$.

    \item The Hausdorff distance from $X$ to any subset lying in an affine $3$-dimensional subspace is at least $\frac{3.90901-3.059879}{2} - \e=0.4245655- \e$.

\end{enumerate}
\end{proposition}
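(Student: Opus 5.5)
Items (1)–(4) follow the pattern of Propositions \ref{Prop2Da} and \ref{Prop3Da}. Each is obtained by specializing Proposition \ref{CorStab} with $S'=X$, and the only inputs are the numerical data read off Figure \ref{fig13}.
\begin{itemize}
\item Item (1) is the Stability Theorem \ref{ThmStab}.
\item Items (2) and (3) are items (2) and (3) of Proposition \ref{CorStab}, with $t$ the last finite $H_0$ death in the barcode.
\item Item (4) is item (4) of Proposition \ref{CorStab}, using the computed diameter $4.6$ of $S$. If $X$ were a point $\{x\}$, then $S$ would lie in the ball of radius $\e<4.6/2$ around $x$, so its diameter would be less than $4.6$.
\end{itemize}

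For item (5) I would strengthen the argument of Proposition \ref{CorStab}(5) into a distance bound. Let $Y\subset\RR^4$ be star-shaped; I would take $Y$ compact (or pass to its closure) so that Theorem \ref{ThmStab} applies. By Lemma \ref{LemmaStar}, $PD(Y)$ consists only of the bar $[0,\infty)$ in dimension $0$. Read from Figure \ref{fig13} the finite bar of longest lifespan, $(0.00051, 2.97292)$. In any matching between $PD(S)$ and $PD(Y)$ this bar must go to the diagonal, at cost $(2.97292-0.00051)/2$. Hence $d_H(S,Y)\geq d_B(PD(S),PD(Y))\geq 1.486205$. The triangle inequality $d_H(X,Y)\geq d_H(S,Y)-d_H(S,X)\geq 1.486205-\e$ then finishes.

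Item (6) needs a new ingredient: a subset $Y$ of an affine hyperplane $P\cong\RR^3$ has trivial $H_3$ persistence in the ambient Cech filtration of $\RR^4$. By the nerve theorem, $\Cech(Y,r)\simeq Y^r$, the $r$-neighborhood taken in $\RR^4$ (open or closed as in the paper's convention). I would show that $Y^r$ deformation retracts onto $Y^r\cap P$. Writing points as $(p,s)$ with $p\in P$ and $s$ the normal coordinate, a point lies in $Y^r$ iff $d(p,Y)^2+s^2<r^2$. This condition is preserved under $(p,s)\mapsto(p,\lambda s)$ for $\lambda\in[0,1]$, so the straight-line homotopy to $s=0$ stays inside $Y^r$. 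Since $Y^r\cap P$ is an open subset of $\RR^3$, its $H_3$ vanishes, so $PD(Y)$ has no bars in dimension $3$.

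Now take the $3$-dimensional bar $(3.059879,3.90901)$ of $PD(S)$; I am assuming this is what Figure \ref{fig13} shows, since the numbers in the statement suggest it. It must be matched to the diagonal, so $d_H(S,Y)\geq (3.90901-3.059879)/2=0.4245655$. The triangle inequality then gives $d_H(X,Y)\geq 0.4245655-\e$.

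The main obstacle is this last step. One must check that the deformation retraction is compatible with the ambient Cech/nerve identification, handling the open versus closed ball convention and compactness of $Y$, and one must correctly read off from the barcode which bar is $3$-dimensional.
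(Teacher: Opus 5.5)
Your proposal is correct and follows the paper's route. Items (1)--(4) are specializations of Proposition~\ref{CorStab} with $S'=X$. Item (5) matches the longest finite bar $(0.00051,2.97292)$ to the diagonal against the trivial diagram of a star-shaped set, then applies the triangle inequality. Item (6) does the same with the $3$-dimensional bar $(3.059879,3.90901)$.

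The one thing you add is a proof of the fact the paper only asserts: that a subset of an affine hyperplane has trivial $H_3$ in the ambient Cech filtration of $\RR^4$. Your retraction $(p,s)\mapsto(p,\lambda s)$ of its $r$-neighborhood onto an open subset of $\RR^3$ establishes this correctly.
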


\begin{proof}
    The statements (1)-(6) are a direct consequence of Corollary \ref{CorStab}. The proof of (5) uses the fact that 
    \[(2.97292, 0.00051)\] 
    is the homology bar of the longest lifespan. The proof of (6) uses the fact that any subset lying in an affine $3$-dimensional subspace has trivial $H_3$, while $PS(S)$ has a $3$-dimensional bar $(3.90901,3.059879)$.
\end{proof}

Now, note that there is no interval $[\alpha,\alpha']$ for which 
    \[
    \im \Big(H_*(\Cech(S,\alpha))\to H_*(\Cech(S,\alpha'))\Big)
    \]
    is isomorphic to the homology of the torus.

The following proposition imposes conditions on $\rch{X}$, assuming that $X$ has the homology of a torus.

\begin{proposition}
Let $X$ and $S$ be the $\omega$-limit set and the sample point-set described above. Assume that $\rch(X)>0$ and $d_H(X,S)\leq \e$. If $X$ is a torus,  $d_H(X,S) > (3-2 \sqrt{2})\rch(X) \approx 0.171573 \rch(X)$. Even more,  $d_H(X,S)\geq \rch(X)/4$.
\end{proposition}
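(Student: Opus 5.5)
We argue by contraposition, using the observation stated just before the proposition: there is no interval $[\alpha,\alpha']$ for which $\im\big(H_*(\Cech(S,\alpha))\to H_*(\Cech(S,\alpha'))\big)$ is isomorphic to the homology of a torus. In particular, the PH of $S$ never has constant torus homology on any interval $[a,b]$ free of births and deaths. The plan is to show that if $d_H(X,S)$ were small relative to $\rch(X)$, then one of the reconstruction results would produce such an interval. That contradicts the computed barcode.

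\textbf{First inequality.} Suppose $X$ is a torus and $d_H(X,S)\le (3-2\sqrt{2})\rch(X)$. We apply Theorem \ref{Recon1} with $\R=\rch(X)$ and $\e=\delta=d_H(X,S)$.
\begin{enumerate}
\item The hypothesis $\e+\delta\sqrt2\le(\sqrt2-1)\R$ becomes $(1+\sqrt2)\,d_H(X,S)\le(\sqrt2-1)\rch(X)$. This is exactly $d_H(X,S)\le(3-2\sqrt2)\rch(X)$.
\item The theorem then gives $\Cech(S,r)\simeq X$ for every $r$ in a reconstruction interval, which is nonempty since $\Delta\ge 0$.
\item If the inequality is strict, $\Delta>0$ and the interval is nondegenerate. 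On it, all $\Cech(S,r)$ have the homology of a torus. The inclusions between them should be isomorphisms, since in the proof of Theorem \ref{Recon1} (Proposition 5 of \cite{Wint}) these equivalences factor through deformation retractions of thickenings onto $X$, compatible with inclusion.
\item Taking $\alpha<\alpha'$ in this interval then gives an image isomorphic to $H_*(T^2)$, contradicting the observation.
\end{enumerate}
In the boundary case of equality, the interval degenerates to a single point. There we instead use that births and deaths form a finite set, so we can perturb, or we can simply note that the second, stronger claim covers this case anyway.

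\textbf{Second inequality.} Suppose $d_H(X,S)<\rch(X)/4$. We apply Theorem \ref{Recon3}.
\begin{enumerate}
\item Choose $\e$ with $d_H(X,S)<\e<\rch(X)/4$.
\item Then $[\e,\rch(X)-\e]$ has length $\rch(X)-2\e>2\e$.
\item So we can pick $\alpha=\e$ and $\alpha'=\rch(X)-\e$ with $\alpha'-\alpha>2\e$.
\item Theorem \ref{Recon3} gives $H_k(X)\cong\im\big(H_k(\Cech(S,\alpha))\to H_k(\Cech(S,\alpha'))\big)$ for all $k$.
\item Since $X$ is a torus, this image is the homology of a torus, again contradicting the observation about the barcode in Figure \ref{fig13}.
\end{enumerate}
Hence $d_H(X,S)\ge\rch(X)/4$. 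Since $1/4>3-2\sqrt2$, this also yields the first inequality, including the equality case. The cleanest write-up is therefore to prove the second statement and deduce the first, mentioning the Theorem \ref{Recon1} argument only as motivation.

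\textbf{Main obstacle.} The main obstacle is justifying the barcode observation rigorously: no pair $\alpha<\alpha'$ yields torus homology. I would verify this from the diagram as follows.
\begin{enumerate}
\item A torus requires $\beta_1=2$ and $\beta_2=1$ in the image, i.e.\ two $1$-dimensional bars and exactly one $2$-dimensional bar containing $[\alpha,\alpha']$.
\item The image must also have no $3$-dimensional bars containing $[\alpha,\alpha']$, and only one $0$-dimensional bar.
\item Using the listed bars, one checks that whenever a $2$-dimensional bar is alive, either fewer than two long $1$-bars are alive, or extra $1$-, $2$- or $3$-bars are also alive throughout.
\end{enumerate}
This is a finite check against the computed barcode. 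Its correctness depends on reading the numerical diagram accurately, and that is where I expect the real work to lie.
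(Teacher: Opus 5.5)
Your proposal is correct and is essentially the paper's proof. Both inequalities come by contraposition: Theorem \ref{Recon1} (with $\R=\rch(X)$ and $\e=\delta=d_H(X,S)$) gives the first and Theorem \ref{Recon3} gives the second, each contradicting the observation that no $[\alpha,\alpha']$ produces torus homology from $PD(S)$. Your remark that $d_H(X,S)\geq \rch(X)/4$ already forces the strict bound $d_H(X,S)>(3-2\sqrt{2})\rch(X)$ (since $1/4>3-2\sqrt{2}$ and $\rch(X)>0$) is a valid shortcut. It makes your detour about inclusions being isomorphisms on the Theorem \ref{Recon1} interval unnecessary.
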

\begin{proof}
    The conclusion follows directly from the Theorem \ref{Recon1}, as its assumptions can not be satisfied. The final stronger inequality follows from Theorem \ref{Recon3}.
\end{proof}

\begin{remark}
    As we are thickenning $X$, at about $X^{3.0598791}$ the thickenning encloses a 3-dimensional homology class until about $X^{3.9090115}$, so there is a significant spatial curving of the torus. 
\end{remark}

However, based on the persistence diagram, the homology of $X$ may be quite different from the homology of the torus. Indeed, there are no births and deaths on the interval $(0.5, 2.9)$ and the homology on this interval is that of $S^1$. Also, there are no births and deaths on the interval $(3.06, 3.5)$ and the homology on this interval is that of $S^3$. Then Corollary \ref{CorRecon1} implies the following:

\begin{proposition}
\label{PropS1S3}
    If  $\rch(X)\geq \frac{0.5}{2-\sqrt{2}}\approx 0.853553$ and             \[
            \e \leq \min\left\{(3-2\sqrt{2})\rch(X),(3-2\sqrt{2}) \frac{2.9}{2-\sqrt{2}} \right\}\approx \min\left\{0.17157\rch(X),0.84939 \right\},
            \]
then $X$ has the homology of $S^1.$

If  $\rch(X)\geq \frac{3.06}{2-\sqrt{2}}\approx 5.22375$ and             \[
            \e \leq \min\left\{(3-2\sqrt{2})\rch(X),(3-2\sqrt{2}) \frac{3.5}{2-\sqrt{2}} \right\}\approx \min\left\{0.17157\rch(X),1.02513 \right\},
            \]
then $X$ has the homology of $S^3.$
\end{proposition}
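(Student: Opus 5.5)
The plan is to apply Corollary \ref{CorRecon1} twice, once for each scale window read off from the barcode in Figure \ref{fig13}. In the first case, take $[a,b]=[0.5,2.9]$. We have observed that the Cech persistent homology of $S$ has no births or deaths on this interval. The hypotheses of the first part of Proposition \ref{PropS1S3} are then exactly the hypotheses of Corollary \ref{CorRecon1}. The condition $\rch(X)\ge a/(2-\sqrt2)$ is stated verbatim. For the second condition, set $\rho=\min\{\rch(X),b/(2-\sqrt2)\}$. Since $3-2\sqrt2>0$, we get $(3-2\sqrt2)\rho=\min\{(3-2\sqrt2)\rch(X),(3-2\sqrt2)b/(2-\sqrt2)\}$. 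So the assumed bound on $\e$, together with $d_H(X,S)\le\e$, gives $d_H(S,X)\le(3-2\sqrt2)\rho$. The Corollary then yields $r_0:=\rho(2-\sqrt2)\in[0.5,2.9]$ and $\Cech(S,r_0)\simeq X$.

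Next I will identify the homology of $\Cech(S,r_0)$ from the barcode. No bar starts or ends inside $[0.5,2.9]$, so $H_*(\Cech(S,r))$ is constant on this interval. Its value is spanned by the bars containing the interval. Reading Figure \ref{fig13}, these are the infinite $0$-dimensional bar and a single $1$-dimensional bar, and nothing in dimensions $2$ or $3$. Thus $H_*(\Cech(S,r_0))\cong H_*(\mathbb{S}^1)$, and the homotopy equivalence transfers this to $X$. Note that we only claim homology. Unlike the planar examples, $\Cech(S,r)$ here is only homotopy equivalent to a subset of $\RR^4$, and the Betti numbers alone do not pin down the homotopy type. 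The statement is phrased accordingly.

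The second case repeats the argument with $[a,b]=[3.06,3.5]$. The threshold becomes $3.06/(2-\sqrt2)$, and the bound becomes $(3-2\sqrt2)\cdot3.5/(2-\sqrt2)$, which matches the stated numbers. On this interval the bars alive are the $0$-dimensional infinite bar and the $3$-dimensional bar $(3.059879,3.90901)$. This is the same bar used in item (6) of the previous proposition. We also need that no $1$- or $2$-dimensional bar survives across $[3.06,3.5]$. Hence the homology is that of $\mathbb{S}^3$.

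The main obstacle is not the parameter verification, which is a mechanical check of Corollary \ref{CorRecon1}. It is the rigorous reading of the persistence diagram. We must certify that no birth or death occurs in the open windows, and we must list exactly which bars straddle them. The stated windows $(0.5,2.9)$ and $(3.06,3.5)$ are open, whereas the Corollary needs a closed $[a,b]$. I would handle this by slightly shrinking or adjusting the endpoints, noting that the numerical thresholds are continuous in $a$ and $b$. Alternatively, one can observe that the bar endpoints nearest to the windows, such as $3.059879<3.06$, lie strictly outside them. One should also check whether the $1$-dimensional bar dying near $2.97292$ (item (5) above) conflicts with the window endpoint $2.9$. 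If it does, I would reduce $b$ slightly below that death time. The conclusion is unchanged, with a correspondingly adjusted constant.
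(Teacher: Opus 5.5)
Your proposal is correct and follows the paper's argument: the paper simply applies Corollary \ref{CorRecon1} with the windows $[0.5,2.9]$ and $[3.06,3.5]$, on which the barcode is constant and equal to the homology of $\mathbb{S}^1$ and $\mathbb{S}^3$ respectively. For the open/closed endpoint issue, use your second remedy (checking that the nearest bar endpoints lie strictly outside the closed windows). Shrinking $[a,b]$ would strengthen the required bounds on $\rch(X)$ and $\e$, so it would no longer prove the statement with the exact constants given.
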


\begin{remark}
    The assumptions of the two statements in Proposition \ref{PropS1S3} are not disjoint. Since the two conclusions are incompatible, we thus deduce that $(\rch(X), \e)$ may not satisfy both the assumptions. The situation is similar to the one represented on Figure \ref{ReachVSdH3D}, where the structure of PH enforced the restriction to the potential region of $(\rch(X), \e)$.
\end{remark}

What is more, note that $\im \Big(H_*(\Cech(S,0.0006))\to H_*(\Cech(S,2.96))\Big)$ is the homology of $S^1$. 
Then Corollary \ref{CorRecon3} for $\alpha=2.96/3,\ \alpha'=2.96$ implies the following:

\begin{proposition}
    If $\e < 2.96/3=0.986667$, and $\rch(X)> \max \{ 4 \e, 2.96 + \e\}$, then $X$ has the homology of $S^1$. 
\end{proposition}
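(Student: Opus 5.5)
This is a direct application of Corollary \ref{CorRecon3} with the choice $\alpha = 2.96/3$ and $\alpha' = 2.96$, exactly as indicated in the sentence preceding the statement. The plan is to verify the corollary's three ingredients: the image identification, the sampling condition, and the reach condition.

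\textbf{Image condition.} We need
\[
\widetilde H=\im\Big(H_*(\Cech(S,\alpha))\to H_*(\Cech(S,\alpha'))\Big)\cong H_*(S^1).
\]
The paper records this for the pair of scales $(0.0006,\,2.96)$. The map for $(\alpha,\alpha')$ factors through the map for $(0.0006,\alpha')$, since $0.0006<\alpha$. So the image for $(\alpha,\alpha')$ is contained in the image for $(0.0006,\alpha')$.

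For equality we read the barcode in Figure \ref{fig13}. The image is spanned by the bars containing $[\alpha,\alpha']$. The bars containing $[0.0006,2.96]$ are the $0$-dimensional bar $[0,\infty)$ and the long $1$-dimensional bar born at about $0.00051$ and dying at $2.97292$. Both of these also contain $[\alpha,2.96]$.

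We must also check that no additional bar contains $[\alpha,2.96]$, i.e.\ no bar is born in $(0.0006,\alpha]$ and survives past $2.96$. This follows from the observation that there are no births or deaths on $(0.5,2.9)$, together with an inspection of the diagram for births in $(0.0006,0.5]$ that survive to $2.96$. Such bars would be long and would appear off the diagonal; none are visible. Hence $\widetilde H\cong H_*(S^1)$. I expect this step to be the main subtlety, since it relies on reading the barcode rather than on a formal argument.

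\textbf{Sampling condition.} Corollary \ref{CorRecon3} requires
\[
d_H(X,S)<\min\{\alpha,(\alpha'-\alpha)/2\}.
\]
With $\alpha=2.96/3$ we have $(\alpha'-\alpha)/2=(2\cdot 2.96/3)/2=2.96/3=\alpha$. So both terms of the minimum equal $0.98667$. This choice of $\alpha$ balances the two constraints, as in the proof of Proposition \ref{Prop3Dc}. Since $d_H(X,S)\le\e<2.96/3$, the condition holds.

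\textbf{Reach condition.} We need
\[
\rch(X)>\max\{4d_H(X,S),\,\alpha'+d_H(X,S)\}.
\]
This follows from the hypothesis $\rch(X)>\max\{4\e,\,2.96+\e\}$ together with the monotonicity $d_H(X,S)\le\e$. Compactness of $X$ holds because $X$ is an $\omega$-limit set of bounded orbits, and positive reach is assumed in the statement. Corollary \ref{CorRecon3} then gives $H_*(X)\cong\widetilde H\cong H_*(S^1)$.
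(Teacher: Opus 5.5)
Your proof is correct and follows the paper's route exactly: apply Corollary~\ref{CorRecon3} with $\alpha=2.96/3$ and $\alpha'=2.96$, a choice that balances $\alpha=(\alpha'-\alpha)/2$. You are in fact more careful than the paper, which only records the image for the scales $(0.0006,2.96)$ and never justifies passing to $(\alpha,\alpha')$.

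One slip to fix: the factorization goes the other way. The map from scale $0.0006$ factors through scale $\alpha$, so the image for $(0.0006,\alpha')$ is contained in the image for $(\alpha,\alpha')$. That is exactly why ruling out extra bars born in $(0.0006,\alpha]$ is the check that matters. This check also needs no visual inspection of $(0.0006,0.5]$: it follows at once from the paper's statement that the homology at every scale in $(0.5,2.9)\ni\alpha$ is that of $S^1$.
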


Thus, based on the persistence diagram constructed, $X$ can be a 2-d torus, an attracting periodic orbit or even a 3-d sphere. Unfortunately, we were not able to compute a lower bound for $\rch{X}$ which would help us to exclude some of these different possibilities. The accurate computation of the reach and of the covering time of a set is a subject deserving more attention.
\section{Concluions}
In this paper we have demonstrated how to deduce the homology and homotopy type of $\omega$-limit sets of orbits of dynamical systems from the persistent homology of a sample $S$. The statements involve only the geometric assumption on the reach of the limit set and the density assumption on $S$ in terms of the Hausdorff distance. We have demonstrated our approach on four different systems. For the first three systems with the known topology of the $\omega$-limit set we obtained the right reconstruction assuming our samples are dense enough. For the fourth system with the known topology of the $\omega$-limit set we laid out the possible limits in dependence of the reach and our sample density. Furthermore, we provide a new way of estimating the reach of an invariant set of a vector field.


\end{document}